\newcommand{\mybox}[1]{%
  \begingroup
  \setlength{\fboxsep}{1pt}%
  \fbox{\small #1}%
  \endgroup
}
\newtheorem{theorem}{Theorem}
\newtheorem{lemma}{Lemma}
\newtheorem{remark}{Remark}
\newtheorem{question}{Question}
\newenvironment{grid}
  {\begingroup\footnotesize
   
   \setlength{\arraycolsep}{0pt}
   \left[\begin{array}{@{}*{10}{c}@{}}}
  {\end{array}\right]\endgroup}
\newcommand{\void}{\phantom{\times}}
\newcommand{\myC}{
    \begin{grid}
        \times & \void & \times \\
        \times & \square & \times \\
        \times & \times & \times
    \end{grid}
}
\newcommand{\myD}{
    \begin{grid}
        \times & \void & \void \\
        \times & \square & \times \\
	\times & \times & \times
    \end{grid}
}
\newcommand{\myE}{
    \begin{grid}
        \times & \square & \times \\
        \times & \times & \times
    \end{grid}
}
\newcommand{\myF}{
    \begin{grid}
        \void & \square & \void \\
        \times & \times & \times
    \end{grid}
}
\newcommand{\myG}{
    \begin{grid}
        \times & \square & \void \\
        \times & \times & \times
    \end{grid}
}
\newcommand{\myH}{
    \begin{grid}
        \times & \void & \void \\
        \times & \square & \void \\
        \times & \times & \times
    \end{grid}
}
\newcommand{\myL}{
    \begin{grid}
        \times & \square & \void & \void \\
        \times & \times & \times & \times
    \end{grid}
}
\newcommand{\myM}{
    \begin{grid}
        \times & \void & \void & \void \\
        \times & \square & \void & \void \\
        \times & \times & \times & \times
    \end{grid}
}
\newcommand{\myP}{
    \begin{grid}
        \square & \square & \void \\
        \times & \times & \times
    \end{grid}
}
\newcommand{\myQ}{
    \begin{grid}
        \times & \square & \square \\
        \times & \times & \times
    \end{grid}
}
\newcommand{\myR}{
    \begin{grid}
        \times & \void & \void & \void \\
        \times & \square & \square & \void \\
        \times & \times & \times & \times
    \end{grid}
}
\newcommand{\myS}{
    \begin{grid}
        \times & \void & \void \\
        \times & \square & \square \\
        \times & \times & \times
    \end{grid}
}
\newcommand{\myT}{
    \begin{grid}
        \times & \square & \square & \void \\
        \times & \times & \times & \times
    \end{grid}
}
\newcommand{\myU}{
    \begin{grid}
        \void & \square & \square & \void \\
        \times & \times & \times & \times
    \end{grid}
}
\newcommand{\myV}{
    \begin{grid}
        \times & \square & \square & \square \\
        \times & \times & \times & \times
    \end{grid}
}
\newcommand{\myW}{
    \begin{grid}
        \times & \void & \void & \void \\
        \times & \square & \square & \square \\
        \times & \times & \times & \times
    \end{grid}
}
\newcommand{\myX}{
    \begin{grid}
        \times & \square & \square & \times \\
        \times & \times & \times & \times
    \end{grid}
}
\newcommand{\myY}{
    \begin{grid}
        \times & \void & \void & \void \\
        \times & \square & \square & \times \\
        \times & \times & \times & \times
    \end{grid}
}
\newcommand{\myZ}{
    \begin{grid}
        \times & \void & \void & \times \\
        \times & \square & \square & \times \\
        \times & \times & \times & \times
    \end{grid}
}
\title{A convolutional approach to bounding the number of polyominoes}
\author{Vuong Bui\thanks{
Swinburne Vietnam, FPT University, Hanoi, 80 Duy Tan Street, Hanoi 100000, Vietnam (\texttt{bui.vuong@yandex.ru})}}
\date{}
\begin{document}
\allowdisplaybreaks

\maketitle

\begin{abstract}
Although known lower bounds for the growth rate $\lambda$ of polyominoes, or Klarner's constant, are already close to the empirically estimated value $4.06$, almost no conceptual progress on upper bounds has occurred since the seminal work of Klarner and Rivest (1973). Their approach, based on enumerating millions of local neighborhoods (also called ``twigs'') yielded $\lambda \le 4.649551$, later refined by Barequet and Shalah (2022) to $\lambda \le 4.5252$ using trillions of configurations. The inefficiency lies in representing each polyomino as an almost unrestricted sequence of neighborhoods once the large set of neighborhoods is fixed.

We introduce a recurrence-based approach that constrains how local neighborhoods concatenate. Using a small system of convolution-type recurrences, we obtain $\lambda \le 4.5238$. The proof is short, self-contained, and hand-checkable. Despite the marginal numerical improvement, the main contribution is methodological: replacing trillions of configurations with a concise one-page system of recurrences. 

In addition, we present a new technique for rigorously bounding the growth of recurrences to any precision, applicable to a broad range of settings with nonnegative coefficients. The resulting upper bound even comes with a nice feature: a small set of parameters serves as the certificate for the bound, that is, one does not need to check more than a few arithmetic calculations to trust the bound.
\end{abstract}

\section{Introduction}
A finite edge-connected set of cells on the square lattice is called a \emph{polyomino}. Polyominoes have been studied for a long time and lie at the intersection of theoretical computer science, mathematics and statistical physics. The central question from the enumerative aspect is: what is the exponential growth of the number $A(n)$ of polyominoes with $n$ cells? The growth constant $\lambda$, also known as ``Klarner's constant'', is formally defined by
\[
    \lambda = \lim_{n\to\infty} \sqrt[n]{A(n)}.
\]
Note that two polyominoes are counted only once if one is a translate of the other.

The almost folklore reason for the existence of the limit is actually first mentioned in \cite{klarner1967cell} by Klarner: $A(l+m)\ge A(l)A(m)$ for every $l,m\ge 1$, which can be proved by a simple concatenation of every pair of polyominoes to obtain a unique polyomino of $l+m$ cells. By Fekete's lemma, this so-called supermultiplicativity of the sequence $A(n)$ implies that the limit exists and is precisely the supremum of $\sqrt[n]{A(n)}$ over all $n$.
The lower bound $\sqrt[n]{A(n)}$ for each $n$ is however not exactly as efficient as the approaches using ``transfer matrices''.
These matrices allow us to estimate the number of polyominoes where the width is at most some $w$ and the length can be arbitrary, see \cite{jensen2001enumerations} for an example. Restricting polyominoes in this way, one can build the polyominoes column by column and we have a matrix of the transitions between the configuration of one column to the next column. However, restricting the problem to bounded-width columns does not suffice for an upper bound. While lower bounds involve linear recurrences, it would be interesting to see that in this article upper bounds could be obtained by studying convolution-type recurrences instead. The state of the art for lower bounds is transfer-matrix method for polyominoes on twisted cylinders \cite{barequet2016lambda}, where $\lambda$ is shown to be strictly larger than $4$ with $\lambda\ge 4.0025$ and quite close to the estimated (without proof) value $\lambda\approx 4.06$ (see \cite{jensen2003counting}).

Although the lower bound $4.0025$ is satisfactorily close to the estimated value of $\lambda$, the other side has not seen much progress. Note that we do not care a lot about the computability, but the actual practicality of the approach. For example, although computing $A(n)$ for large enough $n$ gives an arbitrarily good lower bound to $\lambda$, transfer matrix appears to be a better approach in terms of the amount of computation involved. (Another approach is that if one could prove the widely believed monotonicity of $A(n+1)/A(n)$, we can obtain an even better lower bound with $A(70)$.)
In fact, the computability of $\lambda$ can be deduced from the recent result in \cite{bui2024asymptotic}.
Specifically, \cite{bui2024asymptotic} showed that there exist explicit positive constants $c,t$ so that $A(n)\ge cn^{-t\log n} \lambda^n$ for every $n$. Together with the well-known corollary $A(n)\le\lambda^n$ of Fekete's lemma, we have
\[
    \sqrt[n]{A(n)} \le \lambda \le \sqrt[n]{\frac{n^{t\log n}}{c}A(n)}.
\]
The ratio of upper bounds and lower bounds converges to $1$ as $n\to\infty$.
However, the explicit values of $c,t$ in \cite{bui2024asymptotic} are only of theoretical interest, and to bound $\lambda$ to the desired precision, it requires $A(n)$ for large $n$. It is almost hopeless, as we know only up to $A(70)$, which is already a breakthrough on its own \cite{barequet2024counting} (a delicate variant of the transfer-matrix method is given there).

On the other hand, the asymptotic behavior of the growth constant of polycubes (the higher dimensional version of polyominoes) when the dimension goes to infinity is already known \cite{barequet2010formulae}. However, the technique cannot be brought to lower dimensions, where more delicate analysis is needed.

\paragraph{A brief history of upper bounds.}
Eden \cite{eden1961two} provided the first ever upper bound $\lambda\le 6.75$. One can remember it as the growth rate of the number of (rooted ordered) ternary trees. As each vertex has at most three children and at most one father, the degree of each vertex is at most four, which is also the coordination number of the square lattice. In fact, one can injectively map a polyomino to such a tree. However, a lot of ternary trees have no corresponding polyominoes, since several vertices may coincide when we map them to the square lattice. To reduce the number of such cases, one can take into account a broader context than the nearest neighborhood, e.g., more distant vertices.

Extending Eden's approach, Klarner and Rivest \cite{klarner1973procedure} forbid, for every cell, some certain positions in the neighborhood to have cells and we are allowed to extend through the remaining positions only. Such a configuration of forbidden and extendable positions forms a so-called ``twig''. By carefully designing a set of twigs, we can injectively map a polyomino of $n$ cells into a unique sequence of twigs. The order can be performed in a breadth-first-search manner.

Initially, Klarner and Rivest designed the set with $5$ twigs, which readily gives the bound $\lambda\le 5$. By assigning weights to each twig based on the actual number of forbidden and extendable cells in each twig (which are not uniform), one can obtain better bounds with $\lambda\le 4.83$. Previously, this bound is the only one that can be verified manually without computer assistance. The work \cite{bui2025bounding} reproved the bound in a cleaner way with recurrences.

Subsequently, Klarner and Rivest showed that we can add several twigs at the same time instead of one small twig by one small twig. Although it does not complicate the framework, it captures the neighborhood of each cell to a farther distance, at the cost of the number of twigs growing exponentially. In particular, by studying \emph{millions} of twigs, they obtained
\[
    \lambda\le 4.649551.
\]
The bound stood for almost half a century until the recent work by Barequet and Shalah \cite{barequet2022improved}. It employs the computing power of today and generates \emph{tens of trillions}\footnote{more precisely 23,521,568,438,976} of twigs to yield
\[
    \lambda\le 4.5252.
\]
A huge amount of computation on supercomputers was involved and it may be the case that different implementations may slightly disagree at some points. In particular, Barequet and Shalah pointed out that some of the computations done in Klarner and Rivest's work were not carried out in a correct way, which leads to a bit of mismatch between the original result and the reproduced result. Fortunately, the errors are marginal. Meanwhile, the computation in this article involves a handful of explicit rational numbers only.

\paragraph{Our convolutional approach.}
In this article, we propose a more efficient approach that can obtain the same bounds as Klarner--Rivest and Barequet--Shalah without using a huge amount of computation.
While we agree on using the information of neighborhoods, we do it in a different manner. In particular, for a given kind of neighborhood $\mathcal N$, we denote by $\mathcal N(n)$ the number of polyomino--position pairs so that the polyomino has $n$ cells and the marked position of the polyomino has the neighborhood of type $\mathcal N$. The perspective was used in \cite{bui2025bounding} but was not exactly successful.
In that article, $G(n)$ is the number of polyomino--cell pairs $(P,c)$ so that $c$ is a cell in a polyomino $P$ with $n$ cells and if $c$ is the white cell in 
\[
    \begin{grid}
        \times & \mybox{c} & \void \\
        \times & \times & \times
    \end{grid},
\]
then no cell of $P$ is allowed to be at the crossed positions. More precisely, no cell is allowed to be adjacent to $c$ to the left or from below, and no cell of the row below $c$ is in the adjacent columns.\footnote{This is actually the $L$-shape that was used to build Klarner--Rivest twigs.} On the other hand, we do not constrain any other positions, say, the positions to the right of $c$ and above $c$ may or may not have cells in $P$.

Roughly speaking, $G(n)$ counts the number of occurrences of the neighborhood over all polyominoes with $n$ cells.
For example, the following $5$-cell polyomino
\[
P = \begin{grid}
    \square & \square & \square \\
    \square & \void & \square
\end{grid}
\]
contributes $2$ to $G(5)$, corresponding to $(P,u)$ and $(P,v)$ where $u,v$ are two cells in the bottom, as illustrated below:
\[
\begin{grid}
    \void & \square & \square & \square & \void \\
    \times & \mybox{u} & \times & \mybox{v} & \void \\
    \times & \times & \times & \times & \times
\end{grid}.
\]
Later on we also use neighborhoods that involve more than one cell. For example, the following neighborhood has two cells:
\[
    \begin{grid}
        \void & \square & \void \\
        \times & \square & \times \\
        \times & \times & \times
    \end{grid}.
\]
Although we often do not want to name neighborhoods every time, we still want to count their occurrences. Therefore, for the above neighborhood we denote by $\begin{grid}
        \void & \square & \void \\
        \times & \square & \times \\
        \times & \times & \times
    \end{grid}_n$ the number of occurrences of the neighborhood over all polyominoes with $n$ cells. In other words, we can write $G(n)=\myG_n$. Note that we do not have to specify the marked cell $c$ explicitly in $\begin{grid}
        \void & \square & \void \\
        \times & \square & \times \\
        \times & \times & \times
    \end{grid}$, since $\begin{grid}
        \void & \mybox{c} & \void \\
        \times & \square & \times \\
        \times & \times & \times
    \end{grid}_n = \begin{grid}
        \void & \square & \void \\
        \times & \mybox{c} & \times \\
        \times & \times & \times
    \end{grid}_n$. The polyomino $P=\begin{grid}
    \square & \square & \square \\
    \square & \void & \square
\end{grid}$ also contributes $2$ to $\begin{grid}
        \void & \square & \void \\
        \times & \square & \times \\
        \times & \times & \times
    \end{grid}_5$, like the case of the neighborhood $\myG$ for $G(n)$.
    The two occurrences are $\begin{grid}
    \void & \blacksquare & \square & \square \\
    \times & \blacksquare & \times & \square \\
    \times & \times & \times & \void 
\end{grid}$ and $\begin{grid}
    \square & \square & \blacksquare & \void \\
    \square & \times & \blacksquare & \times \\
    \void & \times & \times & \times
\end{grid}$.
    In fact, one can observe that $\begin{grid}
        \void & \square & \void \\
        \times & \square & \times \\
        \times & \times & \times
    \end{grid}_n\le \myF_{n-1}$ since the conditions for the former are stricter than those for the latter.

It was shown in \cite{bui2025bounding} that $G(0)=G(1)=1$ and for $n\ge 2$,
\[
    G(n) \le 2\sum_{m=1}^{n-1} G(m)G(n-1-m).
\]
Note that an upper bound on the growth rate of $G(n)$ is also an upper bound on $\lambda$ since $G(n)\ge A(n)$, due to the fact that the left-most cell on the bottom-most row of a polyomino has the neighborhood of Type $G$. In fact, the growth rates of $A(n)$ and $G(n)$ are identical since
\[
    A(n)\le G(n)\le nA(n).
\]

We cannot go so far with only one kind of neighborhood,\footnote{In fact, there is another kind of neighborhood in \cite{bui2025bounding} but it largely plays as a supporting role for a more readable proof rather than an essential element.} since the resulting bound is $\lambda\le 4.83$, the same manual bound as in Klarner and Rivest's work. 
Roughly speaking, the approach in this article, after starting with a neighborhood, adds some cells to it, and then decompose the result into smaller neighborhoods, which may be of different types. By doing this, we actually let the neighborhood types interact with (and constrain) each other in a meaningful way (which is quite different from Klarner--Rivest approach).
Integrating more and more neighborhood types, we can improve the best known upper bound with elementary proofs. In the following section, we replace millions of twigs by $6$ types of neighborhoods and improve the bound of Klarner and Rivest. Later on Section \ref{sec:barequet-shalah} goes further with a dozen more types only and already improves the state-of-the-art bound with
\[
	\lambda \le 4.5238.
\]

Although one can jump to Section \ref{sec:barequet-shalah} for the bound $4.5238$, we suggest the readers to get familiarized with the approach first in the following section  (Section \ref{sec:klarner-rivest}), where everything is described in more detail.
In particular, we present an approach to bounding the growth rates of convolution-type recurrences from above with certificates involving only rational numbers (that is, easily verifiable). A lower bound is also obtained in a somewhat dual process, but without certificates, unfortunately. The technique can be applied elsewhere, even to much more complicated recurrences.
In addition, for the readers who are already familiar with Klarner--Rivest twigs, we provide an explanation why our approach could be more efficient at the end of Section \ref{sec:klarner-rivest}.

While one can see that the technique in this article can be readily applied to animals of other lattices, it may be too early to say something beyond that. However, we believe that the technique here can be applied to some other kinds of combinatorial problems with similar local constraints.

\section{Improving the bound of Klarner and Rivest} \label{sec:klarner-rivest}
We improve the Klarner and Rivest bound $\lambda\le 4.649551$ by showing that
\[
    \lambda\le 4.63.
\]
Instead of using millions of twigs, we use only a handful of neighborhoods types but let them interact as promised.
In particular, we define several kinds of neighborhoods similar to $G$ as follows:
\begin{center}
\begin{tabular}{ |c|c|c|c|c|c| } 
 \hline
 E & F & G & H & L & M \\ 
 \hline
 $\myE$ & $\myF$ & $\myG$ & $\myH$ & $\myL$ & $\myM$ \\ 
 \hline
\end{tabular}
\end{center}

We let $E(n),F(n),G(n),H(n),L(n),M(n)$ denote the number of polyomino-cell pairs $(P,c)$ so that $P$ has $n$ cells and if the cell $c\in P$ is at the white square then no cell of $P$ is allowed to be at the crossed positions of the corresponding neighborhood type.
The initial values are obvious:
\[
    E(1)=F(1)=G(1)=H(1)=L(1)=M(1)=1.
\]
In this article, we do not work with $E(n),F(n),G(n),H(n),L(n),M(n)$ for $n\le 0$. In fact, we can safely assume them to be zero without problems.\footnote{The readers of \cite{bui2025bounding} may be confused a bit as $G(0)$ was set to be $1$ in \cite{bui2025bounding} for some convenience. In particular, it serves as the default value for an empty polyomino, which is not considered in this article.} In other words, we do not write $\sum_{\substack{i,j\ge 1\\ i+j=n}}$ but simply write $\sum_{i+j=n}$ for short.
\begin{lemma} \label{lem:first}
For $n\ge 2$,
\begin{align*}
E(n) &\le F(n-1), \\
F(n) &\le G(n) + \sum_{i+j=n} G(i) H(j), \\
G(n) &\le F(n-1)+G(n-1)+\sum_{i+j=n-1} G(i) L(j), \\
H(n) &\le 2G(n-1)+ \sum_{i+j=n-1} E(i) L(j), \\
L(n) &\le F(n-1)+H(n-1)+ \sum_{i+j=n-1} G(i) M(j), \\
M(n) &\le G(n-1)+H(n-1)+ \sum_{i+j=n-1} E(i) M(j).
\end{align*}
\end{lemma}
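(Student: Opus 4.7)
The plan is to prove each of the six inequalities by a case analysis on the cells of $P$ adjacent to $c$, exhibiting in each case an injection from $(P,c)$ pairs of the left-hand type into pairs (or pairs of pairs) of the right-hand types. The forbidden cells around $c$ dictated by each neighborhood type act as barriers that make the splits clean.

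For $E(n) \le F(n-1)$, type $E$ forbids every neighbor of $c$ except the one at $c' = c+(0,1)$ directly above, so for $n\ge 2$ we must have $c'\in P$; the map $(P,c)\mapsto(P\setminus\{c\}, c')$ is injective (with inverse $c = c'-(0,1)$) and lands in an $F$-pair of size $n-1$, because the three cells of the bottom row of $c'$ consist of the forbidden neighbors of $E$ together with $c$ itself. For the recurrences of $G$, $H$, $L$, $M$, the analogous removal of $c$ is safe since each of these types already bars the left and bottom neighbors of $c$; the proof then splits on which of the two remaining candidate neighbors $(1,0)$ and $(0,1)$ lie in $P$. A single-neighbor case yields an injection into a pair of size $n-1$ with a weaker type, accounting for the standalone terms $F(n-1)$, $G(n-1)$, $H(n-1)$, and the $2G(n-1)$ in the $H$-recurrence. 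The two-neighbor case splits $P\setminus\{c\}$ into the sub-polyomino rooted at $(0,1)$ and the sub-polyomino rooted at $(1,0)$; the latter is forced to inherit the wider bottom-row barrier of type $L$ or $M$ because the original forbidden region around $c$ does not stretch far enough to the right, and this explains the convolution terms $\sum G(i)L(j)$, $\sum E(i)L(j)$, $\sum G(i)M(j)$, $\sum E(i)M(j)$.

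The recurrence $F(n)\le G(n) + \sum G(i)H(j)$ is different in structure because $F$ imposes no left barrier and $c$ cannot be removed. If $c+(-1,0)\notin P$ then $(P,c)$ is already a $G$-pair, accounting for $G(n)$. Otherwise I would cut $P$ into a left piece and a right piece along a canonical horizontal edge in the bottom row, choose centers so that one piece becomes a $G$-pair and the other an $H$-pair, and thereby land in the convolution $\sum G(i)H(j)$; the empty bottom row around $c$ guaranteed by $F$ is exactly what lets both pieces inherit their required bottom-row barriers. The main obstacle throughout will be verifying, cell by cell, that each declared center truly has the stated neighborhood type in its sub-polyomino: every forbidden cell of the target type must be either inherited from the original forbidden region around $c$, excluded by removing $c$, or ruled out by the split rule. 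Once these checks are in place, injectivity follows by reinserting $c$ or by reassembling the two pieces along the canonical cut.
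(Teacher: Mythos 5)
Your proposal follows the paper's proof essentially step for step: for each type you case on which free neighbors of $c$ lie in $P$, delete $c$ in the single-neighbor cases, and in the two-neighbor case partition $P\setminus\{c\}$ (or $P$ itself for $F$, where $c$ cannot be deleted) by a canonical allocation of the surrounding free positions to the two pieces, then verify the inherited neighborhood types cell by cell. The paper makes two small things explicit that your sketch only gestures at: the precise allocation rule (the left/top two boundary positions go to one piece, the right/bottom two to the other, which is what ensures both pieces are polyominoes and the map is injective), and the use of lattice rotations/reflections so that the resulting neighborhood types really are $E,F,G,H,L,M$---so ``the wider bottom-row barrier'' of the right piece is actually a reflected $L$ or $M$ rather than a bottom-row one.
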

\begin{proof}
The inequality $E(n)\le F(n-1)$ is fairly simple by
\[
    E(n) = 
    \begin{grid}
        \times & \mybox{c} & \times \\
        \times & \times & \times
    \end{grid}_n
    =
    \begin{grid}
        \void & \mybox{u} & \void \\
        \times & \mybox{c} & \times \\
        \times & \times & \times
    \end{grid}_n = \begin{grid}
        \void & \mybox{u} & \void \\
        \times & \times & \times \\
        \times & \times & \times
    \end{grid}_{n-1} \le \begin{grid}
        \void & \mybox{u} & \void \\
        \times & \times & \times \\
        \void & \void & \void
    \end{grid}_{n-1} = F(n-1).
\]
Indeed, the marked cell $c$ has only one possible neighbor, therefore, there must be a square $u$ on top of $c$ (since $n\ge 2$). Now the cell $c$ becomes isolated from the rest of the polyomino and we can exclude it from the polyomino with the new marked cell being $u$. The neighborhood of $u$ has $6$ forbidden positions, but for the sake of upper bounds, we discard the more distant positions at the bottom row and only forbid the $3$ positions on the row below $u$. This neighborhood is of Type $F$ with one cell less (due to removing $c$). Therefore, $E(n)\le F(n-1)$.

The situation for $F(n)$ is a bit more complicated as follows:
\begin{align*}
    F(n) &=     \begin{grid}
        \void & \mybox{c} & \void \\
        \times & \times & \times
    \end{grid}_n \\
    &= \begin{grid}
        \times & \mybox{c} & \void \\
        \times & \times & \times
    \end{grid}_n + \begin{grid}
        \mybox{u} & \mybox{c} & \void \\
        \times & \times & \times
    \end{grid}_n \\
    &\le \begin{grid}
        \times & \mybox{c} & \void \\
        \times & \times & \times
    \end{grid}_n + \sum_{i+j=n} \begin{grid}
        \void & \times &\void \\
        \mybox{u} & \times & \times \\
        \times & \times &\times
    \end{grid}_i \begin{grid}
        \void & \times & \void & \void \\
        \times & \times & \mybox{c} & \void \\
        \void & \times & \times & \times
    \end{grid}_j \\
    &\le \begin{grid}
        \times & \mybox{c} & \void \\
        \times & \times & \times
    \end{grid}_n + \sum_{i+j=n} \begin{grid}
        \void & \times & \void \\
        \mybox{u} & \times & \void \\
        \times & \times &\void
    \end{grid}_i \begin{grid}
        \void & \times & \void & \void \\
        \void & \times & \mybox{c} & \void \\
        \void & \times & \times & \times
    \end{grid}_j \\
    &= \begin{grid}
        \times & \square & \void \\
        \times & \times & \times
    \end{grid}_n + \sum_{i+j=n} \begin{grid}
        \times & \square & \void \\
        \times & \times & \times
    \end{grid}_i \begin{grid}
        \times & \void & \void \\
        \times & \square & \void \\
        \times & \times & \times
    \end{grid}_j \\
    &= G(n) + \sum_{i+j=n} G(i) H(j).
\end{align*}
The position to the left of the marked cell $c$ can either be empty or has a cell $u$, for which the state is $\begin{grid}
        \times & \mybox{c} & \void \\
        \times & \times & \times
    \end{grid}$ or $\begin{grid}
        \mybox{u} & \mybox{c} & \void \\
        \times & \times & \times
    \end{grid}$, respectively.
The first case corresponds to $G(n)$ trivially. For the second case, we name the adjacent positions $1,2,3,4$ as in
$\begin{grid}
        \void & \mybox{\scriptsize 2} & \mybox{\scriptsize 3} & \void \\
        \mybox{\scriptsize 1} & \mybox{u} & \mybox{c} & \mybox{\scriptsize 4} \\
        \void & \times & \times & \times
    \end{grid}_n$.
We further decompose $P$ into two smaller polyominoes, one contains $u$ and the other contains $c$. The positions $1,2$ if not empty are allocated to the polyomino of $u$. Likewise, the positions $3,4$ are allocated to the one for $c$. The neighborhood of $u,c$ will be
$\begin{grid}
        \void & \times &\void \\
        \mybox{u} & \times & \times \\
        \times & \times &\times
    \end{grid}$ and $\begin{grid}
        \void & \times & \void & \void \\
        \times & \times & \mybox{c} & \void \\
        \void & \times & \times & \times
    \end{grid}$, which will be reduced to respectively $\begin{grid}
        \void & \times & \void \\
        \mybox{u} & \times & \void \\
        \times & \times &\void
    \end{grid}_i$ and $\begin{grid}
        \void & \times & \void & \void \\
        \void & \times & \mybox{c} & \void \\
        \void & \times & \times & \times
    \end{grid}$ for an upper bound. We rotate the former to match with Type $G$. The numbers of cells of the two polyominoes $i,j$ sum up to $n$. The equalities and inequalities follow.

    We remark that the choice of the decomposition into two smaller polyominoes does not matter for the sake of the upper bound. In fact, we often have more than one way to decompose it as the polyomino can be well connected in the way that it is less like a tree. On the other hand, if we just put two polyominoes of $i,j$ cells ($i+j=n$) with the neighborhoods of types $G,H$ where the marked cells are put adjacent to each other, we may not obtain a polyomino with $i+j=n$ cells since the two polyominoes may share some cells. In either case, it does not matter for the direction of the inequalities.

The situation for $G(n)$ is slightly more complicated than $F(n)$ as we expand through both neighbors of $c$ as follows:
\begin{align*}
    G(n) &=     \begin{grid}
        \times & \mybox{c} & \void \\
        \times & \times & \times
    \end{grid}_n \\
    &= \begin{grid}
        \void & \mybox{u} & \void \\
        \times & \mybox{c} & \times \\
        \times & \times & \times
    \end{grid}_n
    + \begin{grid}
        \void & \times & \void \\
        \times & \mybox{c} & \mybox{v} \\
        \times & \times & \times
    \end{grid}_n
    + \begin{grid}
        \void & \mybox{u} & \void \\
        \times & \mybox{c} & \mybox{v} \\
        \times & \times & \times
    \end{grid}_n \\
    &\le \begin{grid}
        \void & \mybox{u} & \void \\
        \times & \times & \times
    \end{grid}_{n-1}
    + \begin{grid}
        \times & \void \\
        \times & \mybox{v} \\
        \times & \times
    \end{grid}_{n-1}
    + \sum_{i+j=n-1}\begin{grid}
        \void & \mybox{u} & \times & \void \\
        \times & \times & \times & \times \\
        \times & \times & \times & \void
    \end{grid}_{i}  \begin{grid}
        \void & \times & \void \\
        \times & \times & \void \\
        \times & \times & \mybox{v} \\
        \times & \times & \times
    \end{grid}_{j} \\
    &\le \begin{grid}
        \void & \square & \void \\
        \times & \times & \times
    \end{grid}_{n-1}
    + \myG_{n-1}
    + \sum_{i+j=n-1}\begin{grid}
        \void & \mybox{u} & \times \\
        \times & \times & \times    \end{grid}_{i}  \begin{grid}
        \times & \void \\
        \times & \void \\
        \times & \mybox{v} \\
        \times & \times
    \end{grid}_{j} \\
    &= \myF_{n-1}
    + \myG_{n-1}
    + \sum_{i+j=n-1}\myG_{i} \myL_{j} \\
    &= F(n-1) + G(n-1) + \sum_{i+j=n-1} G(i) L(j).
\end{align*}

The three possible cases are:
$\begin{grid}
        \void & \square & \void \\
        \times & \square & \times \\
        \times & \times & \times
    \end{grid}$, $\begin{grid}
        \void & \times & \void \\
        \times & \square & \square \\
        \times & \times & \times
    \end{grid}$ and $\begin{grid}
        \void & \square & \void \\
        \times & \square & \square \\
        \times & \times & \times
    \end{grid}$, where for each of the two neighbors of $c$ we either have a cell or let it be empty. (We cannot let both be empty since $n\ge 2$.)
    The first two cases are reduced as in the first inequality. For the third case, we denote the cells and positions in the neighborhood of $c$ by $\begin{grid}
        \void & \mybox{\scriptsize 2} & \void & \void \\
        \mybox{\scriptsize 1} & \mybox{u} & \mybox{\scriptsize 3} & \void \\
        \times & \mybox{c} & \mybox{v} & \mybox{\scriptsize 4} \\
        \times & \times & \times & \void
    \end{grid}$. After removing $c$ from $P$, we partition $P\setminus\{c\}$, which has $n-1$ cells, into 2 polyominoes containing $u,v$ with the positions $1,2$ (resp. $3,4$) being allocated to the polyomino of $u$ (resp. $v$). Other steps are carried out accordingly.

The remaining inequalities will be verified similarly and we sketch it by dropping 1-2 more trivial steps as below:
\begin{align*}
    H(n) &= \myH_n \\
    &= \begin{grid}
        \times & \times & \void \\
        \times & \square & \square \\
        \times & \times & \times
    \end{grid}_n + \begin{grid}
        \times & \square & \void \\
        \times & \square & \times \\
        \times & \times & \times
    \end{grid}_n + \begin{grid}
        \times & \square & \void \\
        \times & \square & \square \\
        \times & \times & \times
    \end{grid}_n\\
    &\le \begin{grid}
            \times & \void \\
            \times & \square \\
            \times & \times
        \end{grid}_{n-1} + \begin{grid}
            \times & \square & \void \\
            \times & \times & \times
        \end{grid}_{n-1} + \sum_{i+j=n-1} \begin{grid}
            \times & \square & \times & \void \\
            \times & \times & \times & \times \\
            \times & \times & \times & \void
        \end{grid}_i \begin{grid}
            \void & \times & \void \\
            \times & \times & \void \\
            \times & \times & \square \\
            \times & \times & \times
        \end{grid}_j \\
        &\le 2G(n-1) + \sum_{i+j=n-1} E(i)L(j),\\
    L(n) &= \myL_n \\
    &=\begin{grid}
        \void & \square & \void & \void \\
        \times & \square & \times & \void \\
        \times & \times & \times & \times
    \end{grid}_n
    +
    \begin{grid}
        \void & \times & \void & \void \\
        \times & \square & \square & \void \\
        \times & \times & \times & \times
    \end{grid}_n
    +
    \begin{grid}
        \void & \square & \void & \void \\
        \times & \square & \square & \void \\
        \times & \times & \times & \times
    \end{grid}_n \\
	&\le \myF_{n-1}
	+ \myH_{n-1} + \sum_{i+j=n-1}
    \begin{grid}
        \void & \square & \times & \void \\
        \times & \times & \times & \times \\
        \times & \times & \times & \times
    \end{grid}_i \begin{grid}
        \void & \times & \void & \void \\
        \times & \times & \void & \void \\
        \times & \times & \square & \void \\
        \times & \times & \times & \times
    \end{grid}_j \\
    &= F(n-1) + H(n-1) + \sum_{i+j=n-1} G(i) M(j),\\
    M(n) &= \myM_n \\
    &= \begin{grid}
        \times & \square & \void & \void \\
        \times & \square & \times & \void \\
        \times & \times & \times & \times
    \end{grid}_n + \begin{grid}
        \times & \times & \void & \void \\
        \times & \square & \square & \void \\
        \times & \times & \times & \times
    \end{grid}_n + \begin{grid}
        \times & \square & \void & \void \\
        \times & \square & \square & \void \\
        \times & \times & \times & \times
    \end{grid}_n \\
    &\le \begin{grid}
        \times & \square & \void\\
        \times & \times & \times
    \end{grid}_{n-1} + \begin{grid}
        \times & \void & \void \\
        \times & \square & \void \\
        \times & \times & \times
    \end{grid}_{n-1} + \sum_{i+j=n-1} \begin{grid}
        \times & \square & \times & \void \\
        \times & \times & \times & \times \\
        \times & \times & \times & \times
    \end{grid}_i \begin{grid}
        \void & \times & \void & \void \\
        \times & \times & \void & \void \\
        \times & \times & \square & \void \\
        \times & \times & \times & \times
    \end{grid}_j \\
    &\le G(n-1) + H(n-1) + \sum_{i+j=n-1} E(i) M(j).\qedhere
\end{align*}
\end{proof}

Let us consider the upper bounds $\hat{E}(n), \hat{F}(n), \hat{G}(n), \hat{H}(n), \hat{L}(n), \hat{M}(n)$ of the original sequences by initializing these sequences similarly with 
\[
    \hat{E}(1)=\hat{F}(1)=\hat{G}(1)=\hat{H}(1)=\hat{L}(1)=\hat{M}(1)=1
\]
and let them mutually be recurrences of each other by replacing inequalities by equalities. In particular, for $n\ge 2$, we have
$\hat{E}(n) = \hat{F}(n-1)$, and similarly for others. To make the recurrences more consistent in the way that they involve only smaller indices, we substitute  $\hat{G}(n)$ in the representation of $\hat{F}(n)$ by the representation of $\hat{G}(n)$.\footnote{In principle, we do not have to substitute. However, not substituting will bring some complications to the process later. In particular, if we do not apply the substitution, we have to be careful with the order of the inequalities in Lemma \ref{lem:converge} to prove. Also, in the algorithm in Section \ref{sec:alternative}, we also have to be careful with the order of updating the variables in each iteration.} Finally, the system of recurrences for $n\ge 2$ is
\begin{align*} 
\hat{E}(n) &= \hat{F}(n-1), \\ 
\hat{F}(n) &= \hat{F}(n-1)+\hat{G}(n-1)+\sum_{i+j=n-1} \hat{G}(i) \hat{L}(j) + \sum_{i+j=n} \hat{G}(i) \hat{H}(j), \\ 
\hat{G}(n) &= \hat{F}(n-1)+\hat{G}(n-1)+\sum_{i+j=n-1} \hat{G}(i) \hat{L}(j), \\ 
\hat{H}(n) &= 2\hat{G}(n-1)+ \sum_{i+j=n-1} \hat{E}(i) \hat{L}(j), \\ 
\hat{L}(n) &= \hat{F}(n-1)+\hat{H}(n-1)+ \sum_{i+j=n-1} \hat{G}(i) \hat{M}(j), \\ 
\hat{M}(n) &= \hat{G}(n-1)+\hat{H}(n-1)+ \sum_{i+j=n-1} \hat{E}(i) \hat{M}(j). 
\end{align*}

Note that the dependency between the sequences is connected, so we have the same growth rates for all the sequences.

Let us analyze the generating functions $\phi_e(x),\phi_f(x),\phi_g(x),\phi_h(x),\phi_l(x),\phi_m(x)$ of the corresponding new sequences, e.g., $\phi_e(x)=\sum_{n\ge 1} \hat{E}(n) x^n$. It follows that
\begin{align*}
\phi_e(x) &= x + x\,\phi_f(x), \\
\phi_f(x) &= x + x\,\phi_f(x) + x\,\phi_g(x) + x\,\phi_g(x)\,\phi_l(x) + \phi_g(x)\,\phi_h(x), \\
\phi_g(x) &= x + x\,\phi_f(x) + x\,\phi_g(x) + x\,\phi_g(x)\,\phi_l(x), \\
\phi_h(x) &= x + 2x\,\phi_g(x) + x\,\phi_e(x)\,\phi_l(x), \\
\phi_l(x) &= x + x\,\phi_f(x) + x\,\phi_h(x) + x\,\phi_g(x)\,\phi_m(x), \\
\phi_m(x) &= x + x\,\phi_g(x) + x\,\phi_h(x) + x\,\phi_e(x)\,\phi_m(x).
\end{align*}
Usually, one would estimate the growth rates with the traditional singularity analysis, which may not be elementary enough for everyone. Moreover, rigorously bounding the growth rates of such a complex system may be not easy with singularity analysis. For the sake of upper bounds only, we use the following approach. To the best of our knowledge, the approach, despite being elementary, is new.
\begin{lemma}\label{lem:alternative}
If there are positive values $e,f,g,h,l,m$ and $x$ so that
\begin{align*}
e &\ge x + x f, \\
f &\ge x + x f + x g + x g l + g h, \\
g &\ge x + x f + x g + x g l, \\
h &\ge x + 2x g + x e l, \\
l &\ge x + x f + x h + x g m, \\
m &\ge x + x g + x h + x e m,
\end{align*}
then the growth rates of $E(n),F(n),G(n),H(n),L(n),M(n)$ are at most 
\[
    \frac{1}{x}.
\]
\end{lemma}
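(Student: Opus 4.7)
The plan is to translate the inequalities on $e, f, g, h, l, m$ into bounds on the partial generating functions of the sequences $\hat E, \hat F, \hat G, \hat H, \hat L, \hat M$, which by construction dominate the original sequences $E, F, G, H, L, M$ from Lemma~\ref{lem:first}. For each letter I would introduce the truncated series $\phi_\bullet^N(x) = \sum_{n=1}^N \hat{\bullet}(n)\,x^n$ and prove by induction on $N$ that
\[
    \phi_e^N(x) \le e,\quad \phi_f^N(x) \le f,\quad \phi_g^N(x) \le g,\quad \phi_h^N(x) \le h,\quad \phi_l^N(x) \le l,\quad \phi_m^N(x) \le m.
\]
Once these bounds hold for every $N$, the estimate $\hat{\bullet}(n)\,x^n \le \phi_\bullet^n(x) \le \bullet$ yields $\sqrt[n]{\hat{\bullet}(n)} \le \bullet^{1/n}/x \to 1/x$, which is exactly the claimed upper bound on the growth rate.

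The base case $N = 1$ reduces to $\phi_\bullet^1(x) = x \le \bullet$, which is the standing hypothesis. For the inductive step I would exploit the dependency structure of the recurrences: the values $\hat G(n), \hat H(n), \hat L(n), \hat M(n)$ refer only to indices strictly smaller than $n$, whereas $\hat F(n)$ references $\hat G(n)$, and $\hat E(n)$ references $\hat F(n-1)$. Consequently, at step $N$, I would first bound $\phi_g^N, \phi_h^N, \phi_l^N, \phi_m^N$ using only the hypothesis at level $N-1$, then $\phi_f^N$ using the freshly established bound on $\phi_g^N$, and finally $\phi_e^N$ using the bound on $\phi_f^{N-1}$.

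The mechanical core is translating each recurrence into an inequality between truncated series. For example, splitting $\phi_g^N(x)$ at $k = 1$ and substituting the recurrence for $k \ge 2$ produces
\[
    \phi_g^N(x) \le x + x\,\phi_f^{N-1}(x) + x\,\phi_g^{N-1}(x) + x \sum_{\substack{i+j \le N-1 \\ i,j \ge 1}} \hat G(i)\,x^i\,\hat L(j)\,x^j \le x + xf + xg + xgl \le g,
\]
where the convolution is dominated by $\phi_g^{N-1}(x)\,\phi_l^{N-1}(x) \le gl$ and the last inequality is the assumed lower bound on $g$. The $\phi_f^N$ case is slightly subtler because the recurrence references $\hat G$ at the same index: the initial term $\hat F(1)\,x = x$ has to be absorbed via $\phi_g^N(x) - x = \sum_{k=2}^N \hat G(k)\,x^k$, giving $\phi_f^N(x) \le \phi_g^N(x) + \phi_g^{N-1}(x)\,\phi_h^{N-1}(x) \le g + gh \le f$. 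The bounds for $\phi_h^N, \phi_l^N, \phi_m^N, \phi_e^N$ follow the identical template and invoke the assumed inequalities for $h, l, m, e$ respectively.

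The main obstacle I expect is not conceptual but careful bookkeeping: in each convolution term one must verify that the indices of the two partial-sum factors never exceed the level at which the induction hypothesis is currently available, and that no stray additive term from an $n=1$ initial value escapes the absorption described above. The ordering $(G, H, L, M) \to F \to E$ within each inductive step neatly dissolves the apparent circularity caused by $\hat F(n)$ depending on $\hat G(n)$ at the same index, so no fixed-point or analytic convergence argument for the generating functions is needed; the entire proof is a monotone induction on partial sums.
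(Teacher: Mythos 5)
Your proof is correct and, in spirit, the same as the paper's: a monotone induction on truncated generating functions that avoids any singularity analysis and yields $\phi_\bullet(x)\le\bullet$ at the chosen $x$, hence boundedness and the growth-rate bound $1/x$. The paper takes a slightly more roundabout route: it introduces auxiliary sequences $e_n,f_n,\dots$ obtained by iterating the recurrences as equalities from the seed value $x$, proves a sandwich lemma placing each $s_n$ between the $n$-th partial sum $\sum_{i\le n}\hat S(i)x^i$ and the full series, and then combines this with the (implicit, by the same induction you describe) bound $s_n\le s$ to conclude $\phi_s(x)\le s$. You cut out the middleman and bound the partial sums directly, which is a bit cleaner; in particular the upper half of the paper's sandwich is not actually needed for the conclusion. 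You also correctly isolate the one same-index dependency, $\hat F(n)$ on $\hat G(n)$, and resolve it by ordering the letters within each inductive step, matching the paper's remark that there is no circular dependency. The only bookkeeping point worth making explicit in a full write-up is the one you already flag: in a convolution such as $\sum_{k\le N-1}\sum_{i+j=k}\hat G(i)\hat L(j)x^k$ both indices satisfy $i,j\le N-2\le N-1$, so the induction hypothesis at level $N-1$ suffices.
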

There is nothing special about our generating functions and the approach can be applied elsewhere.
The simple proof will be given in Section \ref{sec:alternative}.
\begin{theorem} \label{thm:4.63}
    \[
        \lambda\le 4.63.
    \]
\end{theorem}
\begin{proof}
For
\[
    x = \frac{1}{4.63} = \frac{100}{463},
\]
the following rational values
\[
(e,f,g,h,l,m) = 
\left(
\frac{34}{67},
\frac{139}{103},
\frac{67}{82},
\frac{101}{155},
\frac{95}{126},
\frac{106}{177}
\right)
\]
rigorously satisfy the inequalities in Lemma \ref{lem:alternative}. Therefore, the growth rates of all the sequences are bounded by $4.63$. The conclusion follows from the fact that the number of polyominoes with $n$ cells is bounded from above by $G(n)$.
\end{proof}

One can slightly push down the value $4.63$ using one tool or another. However, we would prefer to keep the rational values simple and $4.63$ is fairly close to the true rate. At any rate, the bound already improves the bound $4.649551$ by a significant margin.

\paragraph{Some comparisons with the approach using twigs.}
We relate the technique to the readers who are more familiar with Klarner--Rivest twigs.
There are some issues with twigs that can be improved. The twigs in the set do not interact with each other once the set is established (although they do before that). In particular, we do not constrain at all if some two twigs are allowed to be put adjacent or not. Even worse, the sequence of twigs is listed in the order of breadth-first search, which usually makes the $i$-th twig and the $(i+1)$-st twig have nothing to do with each other, e.g., they are too far apart. In other words, they basically count all the lists composed of elements of the set of twigs without any kind of constraints (other than the trivial constraint that the total numbers of forbidden cells and extendable cells agree with the actual number of cells).
It is true that adding several twigs at once would capture the dependency between twigs to some extent, but letting them interact with each other would hopefully let us express the dependency even further forward.

Another issue is that they construct larger twigs from smaller twigs and the smallest twigs involve the forbidden positions forming an $L$-shape. While the approach gives a consistent way to systematically construct twigs, there is no guarantee that other shapes than $L$-shapes cannot outperform significantly.
It is intuitive that the nearer neighborhood would decide the growth more than the more distant positions. Therefore, one should pay more attention to the former, which may be better off being other than the $L$-shape. A better choice earlier on is likely to save a lot of computation later. One can imagine it like lower-order terms of a Taylor series dominating the rest.

\paragraph{Our drawbacks.} Now come the drawbacks of our approach. The first point is that we still need to systematize it. Although it can be seen that we choose the neighborhoods by increasingly forbidding positions, starting from those closer to the marked cell first, it is still unclear how we can generate more and more useful neighborhoods. Usually these neighborhood types are naturally expanded and we stop expanding when we are satisfied with the resulting bound. However, dealing with bigger and bigger neighborhoods may be not so pleasant. The example given in Section \ref{sec:barequet-shalah} already requires a bit of effort to check all the manipulations. It is also doubtful if all possible neighborhoods could be useful, even if neighborhoods of small sizes seem to suggest that. We also need to invest the computational cost when the size of neighborhoods increases. Of course we should compare the cost with the convergence rate of the resulting upper bound. However, we still do not have an idea if it even converges to Klarner's constant.

\begin{question}
    If we are allowed to choose sets of neighborhoods of increasing sizes and we can derive the recurrences in an optimal way, will the resulting upper bound converge to Klarner's constant?
\end{question}

\section{An alternative method to singularity analysis} \label{sec:alternative}

We propose an approach to approximating the growth rate of the recurrences without using singularity analysis. We avoid singularity analysis since it is not quite suited for complex recurrences. Let us say, even in the simple case of a sequence being a recurrence of itself, how to rigorously locate the dominant singularity of the associated generating function is not straightforward. Meanwhile, our alternative approach provides a simple certificate and an algorithm to yield the certificate. In particular, we will prove Lemma \ref{lem:alternative} and give an algorithm to systematically compute solutions for Lemma \ref{lem:alternative}.
However, one can notice that our approach works for similar situations, that is, convolutional recurrences with nonnegative coefficients. An example is the more complicated system in Section \ref{sec:barequet-shalah}.

The approach is quite natural and straightforward. Given some $\mu_0$ and $\epsilon$, we would conclude for the growth rate $\mu$ of a sequence and the corresponding generating function $\phi$ that either $\phi(\frac{1}{\mu_0})$ diverges or $\phi(\frac{1}{\mu_0+\epsilon})$ converges. In other words, we decide whether $\mu\ge \mu_0$ or $\mu \le \mu_0+\epsilon$. Although strictly speaking it is not quite nice as the two intervals slightly overlap, it still suffices for an approximation algorithm using binary search. Note that our approach will use rational numbers only while the growth rate $\mu$ can have a complicated algebraic nature. Moreover, the behaviour of the function at singularity may be not very convenient to deal with, so we avoid it. That is to say using the method we cannot prove $\mu\le \mu_0$ for $\mu_0=\mu$ but we can prove $\mu\le \mu_0+\epsilon$ for any $\epsilon>0$. 

Apart from the fact that our approach can work with any convolutional recurrences with nonnegative coefficients, we would remark that in bounding the growth rate from below, we need at least one of the relations involving a convolution. However, it is our point to treat such a system, as otherwise, a pure linear system is already well studied in literature.

On the other hand, the process of bounding the growth rate from above yields a certificate for the bound, that is we do not need to run the whole program to trust the upper bound but just need to check a few conditions, provided that we have both the bound and the certificate. We will later ask the question whether it is possible to have a certificate for the lower bound.

To prove that the generating function $\phi(x)$ for some $x$ would be bounded or diverge, we maintain a sequence that converges to $\phi(x)$ from below, and always greater than some partial sum. In particular, we define the sequences
\[
\{e_n\}_{n\ge 1}, \{f_n\}_{n\ge 1},\{g_n\}_{n\ge 1},\{h_n\}_{n\ge 1},\{l_n\}_{n\ge 1},\{m_n\}_{n\ge 1}
\]
so that
\[
	e_1=f_1=g_1=h_1=l_1=m_1=x
\]
and for later indices we have
\begin{align*}
	e_{n+1} &= x + x f_n, \\
	f_{n+1} &= x + x f_n + x g_n + x g_n l_n + g_n h_n, \\
	g_{n+1} &= x + x f_n + x g_n + x g_n l_n, \\
	h_{n+1} &= x + 2x g_n + x e_n l_n, \\
	l_{n+1} &= x + x f_n + x h_n + x g_n m_n, \\
	m_{n+1} &= x + x g_n + x h_n + x e_n m_n.
\end{align*}

One can observe that the sequences are increasing by a simple induction. The base case is $e_2\ge x=e_1,\dots, m_2\ge x=m_1$. It remains to prove $e_{n+1}\ge e_n, \dots, m_{n+1}\ge m_n$ provided that they hold for smaller $n$. Indeed,
\begin{align*}
    e_{n+1}&=x+xf_n \ge x+xf_{n-1} = e_n,\\
    \dots&\dots,\\
    m_{n+1} &= x + x g_n + x h_n + x e_n m_n \ge x + x g_{n-1} + x h_{n-1} + x e_{n-1} m_{n-1} = m_n.
\end{align*}

It turns out that these sequences converge from below to the values of the corresponding generating functions at $x$ (if they are finite), as in the following lemma.
\begin{lemma} \label{lem:converge}
	For each sequence $s_n$ among the given sequences and for every $n$, we have
	\[
		\sum_{i=1}^n \hat{S}(i)x^i \le s_n \le \sum_{i=1}^\infty \hat{S}(i)x^i.
	\]
\end{lemma}

We will prove Lemma \ref{lem:converge} later in Appendix \ref{sec:converge}, whose proof is natural and nothing special, except for the statement of the lemma itself. For now, given Lemma \ref{lem:converge} we are ready to prove Lemma \ref{lem:alternative}.
\begin{proof}[Proof of Lemma \ref{lem:alternative}]
	By the condition of the statement, one can observe that $e,f,g,h,l,m \ge x$. In other words, $e\ge e_1, f\ge f_1, g\ge g_1, h\ge h_1, l\ge l_1, m\ge m_1$.
    Due to the induction of the sequences $e_n,f_n,g_n,h_n,l_n,m_n$, we have $e_n\le e, f_n\le f, g_n\le g, h_n\le h, l_n\le l, m_n\le m$ for every $n$. Meanwhile, the sequences $e_n,f_n,g_n,h_n,l_n,m_n$ converge to $\phi_e(x),\phi_f(x),\phi_g(x),\phi_h(x),\phi_l(x),\phi_m(x)$, respectively. Therefore, $\phi_e(x)\le e,\phi_f(x)\le f,\phi_g(x)\le g,\phi_h(x) \le h,\phi_l(x) \le l,\phi_m(x) \le m$. In other words, they are bounded, hence the growth rates of the corresponding sequences are at most $1/x$. The conclusion follows since $\hat{F}(n)\ge F(n), \dots, \hat{M}(n)\ge M(n)$.
\end{proof}

\subsection*{An algorithm to find the certificate}
Now comes the algorithm to find such a solution for Lemma \ref{lem:alternative}.
Given some $\mu_0$ and $\epsilon$, we would decide if the growth rate $\mu$ of the sequences is either greater than $\mu_0$ or at most $\mu_0+\epsilon$. In fact, we will run two algorithms concurrently and see which algorithm will stop first.

\begin{itemize}
    \item Checking if $\mu\le \mu_0+\epsilon$.

    Suppose $\mu\le \mu_0$. It follows that setting
    \[
        x=\frac{1}{\mu_0+\frac{\epsilon}{2}}
    \]
    would make the sequences $e_n,\dots,m_n$ stay bounded. In particular, the sequence $\Delta_e(n) = \phi_e(x) - e_n$ is nonnegative and decreasing to $0$. The same applies to the corresponding sequences $\Delta_f(n),\dots,\Delta_m(n)$. 
    Rewriting $\phi_e(x) = x + x\phi_f(x)$ using the newly defined terms, we have
    \[
        e_n + \Delta_e(n) = x + x(f_n + \Delta_f(n))
    \]
    
    When $n$ is large enough, the values $\Delta_e(n)$ and $\Delta_f(n)$ become small enough, which makes
    \[
        e_n \ge x' + x'f_n
    \]
    for $x'=\frac{1}{\mu_0+\epsilon}$, which is smaller than $x=\frac{1}{\mu_0+\frac{\epsilon}{2}}$ by a certain amount.
    When $n$ is large enough, the value $x'$ also satisfies the other inequalities. In other words, we obtain the solution $x'$ and $e_n,\dots,m_n$ of Lemma \ref{lem:alternative}.
    
    The algorithm is as simple as: Running for larger and larger $n$ until $x'$ satisfies the inequalities.
    
    If the algorithm stops, we conclude $\mu\le \mu_0+\epsilon$.
    
    \item Checking if $\mu\ge \mu_0$.

    Suppose $\mu>\mu_0$.\footnote{This is the negation of the previous assumption that $\mu\le\mu_0$. We can prove by computation that the generating functions diverge at $\frac{1}{\mu_0}$. However, it implies $\mu\ge \mu_0$ only.} It follows that setting
    \[
        x=\frac{1}{\mu_0}
    \]
    would make the generating functions diverge at $x$. In other words, the sequences $e_n,\dots,m_n$ surpass any threshold.  Particularly, $e_n,\dots,m_n$ are all larger than $\frac{1}{x}$ for some $n$ large enough. It follows that
    \[
      m_{n+1} = x + x g_n + x h_n + x e_n m_n \ge x + xe_nm_n\ge x + x\frac{1}{x}m_n = x + m_n.
    \]
    This makes the sequences diverge, as two consecutive elements for $n$ large enough differ by at least a fixed amount $x$.

    The algorithm is as simple as: Running for larger and larger $n$ until the sequences all surpass $\frac{1}{x}=\mu_0$.
    
    If the algorithm stops, we conclude $\mu\ge \mu_0$.
\end{itemize}

\paragraph{The algorithm.}The main algorithm is a combination of both algorithms:
\begin{quote}
We generate sequences $e_n,\dots,m_n$ with both starting points $x=\frac{1}{\mu_0+\frac{\epsilon}{2}}$ and $x=\frac{1}{\mu_0}$. In each step $n$,
\begin{itemize}
    \item if in the first setting we have $x'=\frac{1}{\mu_0+\epsilon}$ satisfies the inequalities in Lemma \ref{lem:alternative} with $e=e_n,\dots,m=m_n$ then we conclude $\mu\le \mu_0+\epsilon$ with the corresponding certificates $x',e_n,\dots,m_n$.
    \item if in the second setting all $e_n,\dots,m_n$ are at least $\frac{1}{x} = \mu_0$ then we conclude $\mu\ge \mu_0$.
\end{itemize}
The algorithm will eventually terminate for large enough $n$, since we always have either the assumption $\mu\le \mu_0$ or the assumption $\mu>\mu_0$.
\end{quote}

\begin{remark}
    When proving $m_{n+1}\ge x+m_n$, we have elements of the sequence $\{m_n\}_{n\ge 1}$ appearing on both sides of the recurrence of $m_{n+1}$, but it is not exactly necessary. In a system where the dependency between sequences is connected, we are still able to show such a thing by a circular dependence. For example, we can write
    \[
   	    l_{n+2} = x + x f_{n+1} + x h_{n+1} + x g_{n+1} m_{n+1} \ge x + xg_{n+1}m_{n+1}\ge x + g_{n+1},
    \]
    while
    \[
    	g_{n+1} = x + x f_n + x g_n + x g_n l_n \ge x+xg_nl_n\ge x + l_n.
    \]
    That is
    \[
        l_{n+2}\ge 2x+l_n.
    \]
\end{remark}

\subsection*{On implementations and certificates}
We have not discussed the convergence rate although in practice the number of iterations is usually not large, relative to the smallness of $\epsilon$. In fact, our main concern is the implementations of the numbers and the arithmetic operations, which may or may not make each iteration computationally expensive. In particular, we implicitly assume that operations are done in constant time, while in fact the most straightforward implementation using fractions would make the denominators explode very fast. This is due to the multiplications making the denominator kind of square in each iteration. One may consider floating point numbers instead. However, we cannot conveniently guarantee the convergence of the sequences and the termination of the algorithm. A lower bound produced by the algorithm could be not reliable then. However, in the case of an upper bound, we do not have to trust the algorithm, as long as the certificate it yields satisfies the inequalities in Lemma \ref{lem:alternative}. In principle, some inequalities could be violated by a small margin. But it should not be a big problem because we can slightly increase the upper bound to make the certificate valid.
In particular, if we want to obtain a certificate for $4.63$, we may set $\mu_0=4.628$ and $\epsilon=0.001$. We need to make $\mu_0+\epsilon$ smaller than $4.63$ by some margin, since floating point operations may make some inequalities slightly invalid. We obtain the certificate (using Python floating point numbers):
\[
(e,f,g,h,l,m)=
\begin{pmatrix}
0.5093160670307050 \\
1.3576230676346843 \\
0.8208902685491735 \\
0.6540068185686590 \\
0.7571242396493729 \\
0.6007616699697459
\end{pmatrix}.
\]

Since $\mu_0+\epsilon=4.629$ is smaller than $4.63$ by a certain margin, it allows us to mutate the above values a bit. In particular, one can choose the values in the proof of Theorem \ref{thm:4.63}, where the rational representations have much smaller denominators. On the other hand, if we set $\mu_0=4.629$, that is $\mu_0+\epsilon=4.63$, then the certificate obtained from the algorithm using floating points is no longer valid.

The approach of certificates allows us to use any mathematical programming tools to solve the program in Lemma \ref{lem:alternative}, just that it may be not so mathematically guaranteed as in our proposed algorithms. Meanwhile, we do not yet have a certificate-based approach to the lower bounds. One still has to rerun the whole algorithm to really trust a lower bound. Fortunately, we do not need a lower bound in this article.

Another approach to lower bounds is given in \cite{bui2024mixed}, which works for recurrences using not only summations but also the maximum operators (which is exactly a weak point of the methods based on generating functions). Adapting it to our situation, one would start with
\[
    \hat{G}(n)\ge \hat{G}(i)\hat{L}(j)
\]
for any $i,j$ so that $i+j=n-1$. We can safely set $0$ to the nonpositive indices of the sequences. In this way, $i,j$ can take nonpositive values as well. 

We also have
\[
    \hat{L}(j)\ge \hat{F}(j-1)\ge \hat{G}(j-1).
\]
In total,
\[
    \hat{G}(n)\ge \hat{G}(i)\hat{G}(j-1).
\]
Rewriting it yields
\[
    \hat{G}(i+j-2)\ge \hat{G}(i-2)\hat{G}(j-2)
\]
for any $i,j$.
In other words, the sequence $s_n=\hat{G}(n-2)$ is supermultiplicative. The growth rate of $s_n$ and $\hat{G}(n)$ are identical, and equal to the supremum of $\sqrt[n]{\hat{G}(n-2)}$. That is $\sqrt[n]{\hat{G}(n-2)}$ for any $n$ would be a lower bound for the growth rate. However, we cannot quite say that this is a certificate as we need to run an algorithm to compute $\hat{G}(n)$ for large $n$. An advantage of this approach is that $\hat{G}(n)$ grows exponentially with $n$ only, while the denominators of $g_n$ can grow doubly exponentially in principle.

Although a lower bound is not of interest for the recurrences in this article, we still pose the following question.
\begin{question}
    Can we have a simple and quickly verifiable certificate for a lower bound on the growth rate of convolution-type recurrences?
\end{question}

\section{Improving the bound of Barequet and Shalah}
\label{sec:barequet-shalah}
One could also wish for improving the state-of-the-art bound $\lambda\le 4.5252$, which was proved using tens of trillions of twigs. It would be interesting and would definitely make our approach a good proof-of-concept if we can still handle all recurrences manually. Note that when previous works reduced the bound from $4.649551$ to $4.5252$, the number of twigs increased \emph{from millions to tens of trillions} and we would say that we have reached the limit of possible improvement given the current computing power. Therefore, we shall also pay attention to how many more neighborhood types are to be considered in the new approach. In fact, there will be only a handful more, and the recurrences fit nicely in one page.

In this section, we prove
\[
    \lambda\le 4.5238
\]
using some enhancements over the previous section.
Before we even attempt to add more neighborhood types, we observe that we previously did not quite let the neighborhood types interact with each other a lot. In particular, we only tried to add cells that are adjacent to the marked cell. If we add more cells at a bit more distant positions, we can capture the nature of the square lattice better. 

Besides having more ``depth'' with the recurrences, one can of course add more neighborhood types to introduce more ``breadth'' to the approach.
One can forbid cells at more distant positions, but we do not have to do that at a very far distance to improve the bound $4.5252$. In particular, we mostly keep the positions to forbid, but consider more squares altogether. By allowing more cells to be included in the neighborhood state, we avoid partitioning the polyomino into too many smaller polyominoes, and it allows us to look slightly further to lower depths.

The neighborhood types we use are:
\begin{center}
\begin{tabular}{ |c|c|c|c|c|c|c|c|c| } 
 \hline
	C & D & E & F & G & H & P & Q & R \\ 
 \hline
	$\myC$ & $\myD$ & $\myE$ & $\myF$ & $\myG$ & $\myH$ & $\myP$ & $\myQ$ & $\myR$ \ \\ 
 \hline
\end{tabular}
\end{center}

\begin{center}
\begin{tabular}{ |c|c|c|c|c|c|c|c| } 
 \hline
	S & T & U & V & W & X & Y & Z \\
 \hline
	$\myS$ & $\myT$ & $\myU$ & $\myV$ & $\myW$ & $\myX$ & $\myY$ & $\myZ$ \\ 
 \hline
\end{tabular}
\end{center}

Obviously,
\[
	C(1)=D(1)=E(1)=F(1)=G(1)=H(1)=1
\]
and
\[
	P(1)=Q(1)=R(1)=S(1)=T(1)=U(1)=V(1)=W(1)=X(1)=Y(1)=Z(1)=0.
\]
We again use the convention that the sequences at nonpositive indices are all zeros.

\begin{lemma} \label{lem:full}
{\footnotesize
	For $n\ge 2$,
	\begin{align*}
	C(n) &\le E(n-1), \\
	D(n) &\le G(n-1), \\
	E(n) &\le F(n-1), \\
	F(n) &\le G(n) + P(n), \\
	G(n) &\le E(n) + Q(n), \\
	H(n) &\le D(n) + S(n), \\
	U(n) &\le \sum_{i+j=n} D(i)H(j)
	      + \sum_{i+j=n} S(i)D(j)
	      + \sum_{i+j=n} Y(i)R(j)
        + \sum_{i+j=n} W(i)Y(j)
	      + \sum_{i+j+k=n} U(i)Z(j)Z(k), \\
	T(n) &\le X(n) + V(n), \\
	P(n) &\le \sum_{i+j=n} E(i)H(j)
	      + \sum_{i+j=n} Q(i)D(j)
	      + \sum_{i+j=n} X(i)R(j)
	      + \sum_{i+j=n} V(i)Y(j)
	      + \sum_{i+j+k=n} U(i)Y(j)Z(k), \\
	Q(n) &\le G(n-1)
	      + \sum_{i+j=n-1} G(i)E(j)
	      + U(n-2)
	      + \sum_{i+j=n-2} T(i)G(j)
	      + \sum_{i+j=n-2} R(i)U(j), \\
	R(n) &\le Y(n) + W(n), \\
	S(n) &\le G(n-1)
	      + \sum_{i+j=n-1} E(i)E(j)
	      + T(n-2)
	      + \sum_{i+j=n-2} X(i)G(j)
	      + \sum_{i+j=n-2} Y(i)U(j), \\
	V(n) &\le S(n-1)
	      + \sum_{i+j=n-2} G(i)G(j)
	      + \sum_{i+j=n-2} T(i)E(j)
	      + \sum_{i+j=n-2} R(i)T(j), \\
	W(n) &\le S(n-1)
	      + \sum_{i+j=n-2} E(i)G(j)
	      + \sum_{i+j=n-2} X(i)E(j)
	      + \sum_{i+j=n-2} Y(i)T(j), \\
	X(n) &\le D(n-1) + G(n-2) + U(n-2), \\
	Y(n) &\le C(n-1) + G(n-2) + T(n-2), \\
	Z(n) &\le C(n-1) + E(n-2) + X(n-2).
	\end{align*}
}
\end{lemma}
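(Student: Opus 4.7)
The plan is to reuse wholesale the methodology of Lemma~\ref{lem:first}, applied now to the seventeen richer neighborhoods. For each type $\mathcal N$ I consider the marked cell $c$ together with the handful of lattice positions adjacent to the crossed/void pattern that are still undetermined by $\mathcal N$; a case split on which of these positions are occupied partitions every pointed polyomino counted by $\mathcal N(n)$. Each case either (i) reduces to a single smaller pointed polyomino after relabelling the mark and deleting an isolated cell, (ii) splits along a separating edge/line into two pointed sub-polyominoes whose neighborhoods can be weakened to two of the listed types, or (iii) splits into three pointed sub-polyominoes in the same manner. After weakening the resulting neighborhoods (deleting some crossed cells to match a listed type), one gets respectively a single term, a convolution $\sum_{i+j=n}\cdot\,\cdot$, or a triple convolution $\sum_{i+j+k=n}\cdot\,\cdot\,\cdot$. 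This is precisely what produces the six simple substitutions, the three sum-of-two-types identities, and the remaining convolution inequalities in the statement.

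Concretely, I would first dispose of the trivial recurrences $C(n)\le E(n-1)$, $D(n)\le G(n-1)$, $E(n)\le F(n-1)$. Each of these neighborhoods forces $c$ to have exactly one free adjacent position, so a cell $d$ must live there; after discarding the now-isolated $c$ and re-marking $d$, we relax $d$'s forced neighborhood to a weaker listed type, exactly as in the argument for $E(n)\le F(n-1)$ in the proof of Lemma~\ref{lem:first}. Next I would take care of $F(n)\le G(n)+P(n)$, $G(n)\le E(n)+Q(n)$, $H(n)\le D(n)+S(n)$, $T(n)\le X(n)+V(n)$, and $R(n)\le Y(n)+W(n)$: each splits on whether a single free position adjacent to $c$ is empty or occupied, giving exactly two alternatives among the listed types.

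With those out of the way, the bulk of the work is the convolution recurrences for $U,P,Q,R,S,V,W,X,Y,Z$. For the single convolutions (those for $V$, $W$, $X$, $Y$, $Z$) the argument mirrors the treatment of $F(n)$ in Lemma~\ref{lem:first}: identify a pair $(c,d)$ of cells whose neighborhoods can be made disjoint by pushing a few of the contested crossed positions entirely into one side, and then decompose $P\setminus\{c\}$ (or $P\setminus\{c,d\}$) into two polyominoes, each inheriting one of the two sub-neighborhoods. The harder cases are the five-term recurrences for $U$, $P$, $Q$, $S$ and the four-term ones for $V$, $W$; they branch on which combinations of two \emph{or three} free positions around $c$ are simultaneously occupied. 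The genuinely new ingredient here are the triple convolutions, obtained when three neighbors of $c$ belong to $P$ and lie in three separate directions: removing $c$ splits $P$ into three connected pointed pieces, explaining terms like $\sum_{i+j+k=n} U(i)Z(j)Z(k)$ and $\sum_{i+j+k=n} U(i)Y(j)Z(k)$.

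The main obstacle, and essentially the only one, is bookkeeping: for every decomposition I have to check (a) that the map from pointed polyominoes in the case to tuples of smaller pointed polyominoes in smaller neighborhoods is injective (contested positions along the cut must be allocated to exactly one side, with the other side explicitly forbidding them, just as the positions labelled $1,2,3,4$ are handled in the proof of $F(n)$ in Lemma~\ref{lem:first}); and (b) that after the inevitable relaxation of a handful of crossed cells, the resulting weaker neighborhoods fall inside the list $\{C,D,E,F,G,H,P,Q,R,S,T,U,V,W,X,Y,Z\}$. The seventeen types have evidently been chosen with exactly this closure property in mind, so once the list is fixed the seventeen calculations go through one by one in a direct, picture-driven manner, with no algebra beyond the grid diagrams themselves; none of the individual verifications is harder than the ones already carried out in Lemma~\ref{lem:first}.
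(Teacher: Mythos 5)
Your proposal correctly identifies the paper's methodology: a case split on the undetermined positions around the marked cell (or cells), followed by deletion of isolated cells and re-marking, or by cutting the polyomino along a separating line into pointed sub-polyominoes whose neighborhoods are then weakened to the listed types, with contested positions along the cut allocated entirely to one side. This is exactly how the paper's Appendix~\ref{sec:verification} proceeds, inequality by inequality, and your grouping of the seventeen recurrences into tiers (trivial cell-deletions for $C,D,E$; binary splits into equalities for $F,G,H,T,R$; convolution recurrences for the rest) matches the paper's organisation.

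Two of your descriptive statements are, however, off in a way worth flagging. First, a minor one: $X$, $Y$, $Z$ do not have ``single convolutions''; their recurrences are plain sums of shifted terms with no convolution at all (the two-cell squares pattern means no partition is needed; only isolated cells are deleted in layers). Second, and more substantively, your account of the triple convolution mechanism is wrong. You claim it arises ``when three neighbors of $c$ belong to $P$ and lie in three separate directions: removing $c$ splits $P$ into three connected pointed pieces.'' But in the paper's verification of $P(n)$ and $U(n)$, the triple-convolution terms come from the case where all six cells of a $3\times 2$ block are occupied; removing the single marked cell from such a block does not disconnect anything. What actually happens is that the polyomino is partitioned along two cuts so that the six cells of the block are distributed $2$--$2$--$2$ among three pointed sub-polyominoes, each of which lands in one of the two-cell neighborhood types $U$, $Y$, $Z$ after weakening. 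If you tried to follow the ``remove $c$ to disconnect'' mechanism literally, you would not arrive at the stated recurrences and would be stuck on those two cases. The rest of your sketch would go through, but you would need this correction before the verification of $P$ and $U$ could be completed.
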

The verification of Lemma \ref{lem:full} is given in Appendix \ref{sec:verification}.
\begin{theorem}
	\[
	    \lambda\le 4.5238.
	\]
\end{theorem}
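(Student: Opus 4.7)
The plan is to mirror the three-step argument of Section \ref{sec:klarner-rivest} with the larger system of Lemma \ref{lem:full}. First, I would introduce dominating sequences $\hat{C}(n), \hat{D}(n), \ldots, \hat{Z}(n)$ by promoting each inequality of Lemma \ref{lem:full} to an equality, with the same initial values. A quick scan of the dependencies shows that at each level $n$ one may first compute $\hat{P}(n), \hat{Q}(n), \hat{S}(n), \hat{T}(n), \hat{U}(n), \hat{V}(n), \hat{W}(n), \hat{X}(n), \hat{Y}(n), \hat{Z}(n)$ and $\hat{E}(n)$ from level-$(n-1)$ data, and then close the level with $\hat{G}(n) = \hat{E}(n) + \hat{Q}(n)$, $\hat{H}(n) = \hat{D}(n) + \hat{S}(n)$, $\hat{F}(n) = \hat{G}(n) + \hat{P}(n)$; so no circular dependency arises.

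Next I would pass to the generating functions $\phi_c(\chi), \phi_d(\chi), \ldots, \phi_z(\chi)$. Here I use $\chi$ for the formal variable to avoid clashing with the neighborhood type $X$ (whose corresponding real value I reserve the symbol $x$ for). The seventeen recurrences translate into seventeen polynomial identities; for instance $\phi_e = \chi + \chi\,\phi_f$, and
\[
    \phi_u = \phi_d \phi_h + \phi_s \phi_d + \phi_y \phi_r + \phi_w \phi_y + \phi_u \phi_z^2 .
\]
The same monotone fixed-point argument underlying Lemma \ref{lem:alternative} (whose proof is in Appendix \ref{sec:alternative} and extends to more variables verbatim) then yields the following: if positive reals $c, d, e, f, g, h, p, q, r, s, t, u, v, w, x, y, z$ and a positive $\chi_0$ satisfy the seventeen inequalities obtained by reading each identity as ``left side $\ge$ right side'' with every $\phi_\bullet$ replaced by the corresponding letter and every $\chi$ replaced by $\chi_0$ (together with the trivial lower bounds $\bullet \ge \chi_0$ for the variables starting at index $1$), then every one of the seventeen underlying sequences has growth rate at most $1/\chi_0$.

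The main obstacle, and the only nontrivial computational step, is producing explicit rational witnesses at $\chi_0 = 1/4.5238$. My plan is to run fixed-point iteration on the generating-function system at this $\chi_0$ (starting from the zero vector; iteration converges in the positive cone because $\chi_0$ sits comfortably inside the expected radius of convergence, $4.5238 \gg 4.06$), and then round each coordinate upward to a rational with a two- or three-digit denominator, leaving enough slack to absorb rounding error in all seventeen inequalities simultaneously. Because every identity is a polynomial of degree at most three and each coordinate of the fixed point is well-separated from zero, the rationalisation should succeed, in direct analogy with the six-variable witness exhibited in Section \ref{sec:klarner-rivest} but with somewhat more care. The conclusion $\lambda \le 4.5238$ then follows from the observation $A(n) \le G(n)$ already invoked in the previous section.
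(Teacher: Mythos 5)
Your overall structure matches the paper's proof exactly: define the dominating hatted sequences by promoting the inequalities of Lemma~\ref{lem:full} to equalities, pass to generating functions, invoke the monotone fixed-point argument of Lemma~\ref{lem:alternative} (which does extend verbatim), and conclude $\lambda \le G(n)$'s growth rate. However, there is a genuine gap: the theorem is a specific numerical claim, and the only content that establishes the constant $4.5238$ is the exhibition of an explicit rational witness vector $(c,d,\ldots,z)$ satisfying the seventeen inequalities at $\zeta = 1/4.5238$. You describe a \emph{plan} for finding such a witness (fixed-point iteration followed by upward rounding), but you never produce the numbers, so the proof is incomplete at precisely its load-bearing step. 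The paper does exhibit them, e.g.\ $c = \tfrac{871}{2500},\ d = \tfrac{2157}{5000},\ \ldots,\ z = \tfrac{567}{5000}$, and the verification of the inequalities with these values is the actual proof.

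A secondary, more conceptual issue is your justification that the iteration ``converges because $4.5238 \gg 4.06$.'' The estimate $4.06$ is for $\lambda$ itself, but the hatted sequences $\hat{C},\ldots,\hat{Z}$ are only upper bounds on the true counting sequences and may grow strictly faster; their common growth rate $\hat{\lambda} \ge \lambda$ is unknown a priori and could in principle exceed $4.5238$, in which case the generating functions diverge at $\zeta = 1/4.5238$ and no witness exists. The existence of a bounded fixed point at this $\zeta$ is exactly what the witness certifies, so you cannot presuppose convergence to argue the witness exists. This circularity is harmless as a heuristic for how to search, but it cannot stand in for the exhibited numbers.
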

\begin{proof}
	We let $\hat{C}(n),\hat{D}(n),\dots,\hat{Z}(n)$ be the upper bounds of $C(n),D(n),\dots,Z(n)$ in the same manner as in Section \ref{sec:klarner-rivest}. In particular, we initialize them the same values $\hat{C}(1)=C(1),\dots,\hat{Z}(1)=Z(1)$ and let them mutually be recurrences of each other by replacing inequalities by equalities. In other words, for $n\ge 2$, we have $\hat{C}(n) = \hat{E}(n-1)$ and similarly for others, for example,
	\[
		\hat{W}(n) = \hat{S}(n-1) + \sum_{i+j=n-2} \hat{E}(i)\hat{G}(j) + \sum_{i+j=n-2} \hat{X}(i)\hat{E}(j) + \sum_{i+j=n-2} \hat{Y}(i)\hat{T}(j).
	\]
    One can manually check by hand that there is no circular dependency, therefore, the sequences are well-defined.\footnote{We do not apply substitutions as in the recurrences of Section \ref{sec:klarner-rivest} since writing the full expansions would be too long. However, everything we have to do is to be careful with the order when proving and calculating. For example: we can first compute $\hat{C}, \hat{D}, \hat{E}, \hat{Q}, \hat{S}, \hat{V}, \hat{W}, \hat{X}, \hat{Y}, \hat{Z}, \hat{P}, \hat{U}$ from smaller indices, and then $\hat{G}, \hat{F}, \hat{H}, \hat{T}, \hat{R}$.} 

	We use capital letters $C,D,\dots,Z$ to denote the generating functions of these new sequences. For example, $C = \sum_{n\ge 1} \hat{C}(n)\zeta^n$. (We do not write $C(\zeta)$ in the place of $C$ as it is a bit too lengthy for the following equations and we use $\zeta$ for the variable as it is more distinguishable from the newly introduced $X$.) The generating functions satisfy
    {\small
	\begin{gather*}
		C = \zeta + \zeta E,\quad D = \zeta + \zeta G,\quad E = \zeta + \zeta F,\quad F = G + P,\quad G = E + Q,\quad H = D + S,\\
		P = E H + Q D + X R + V Y + U Y Z,\quad Q = \zeta G + \zeta(G E) + \zeta^{2}(U + T G + R U),\quad R = Y + W,\\
		S = \zeta G + \zeta E^{2} + \zeta^{2} T + \zeta^{2} X G + \zeta^{2} Y U,\quad T = X + V,\quad U = D H + S D + Y R + W Y + U Z^{2},\\
		V = \zeta S + \zeta^{2}(G^{2} + T E + R T),\quad W = \zeta S + \zeta^{2}(E G + X E + Y T),\\
		X = \zeta D + \zeta^{2}(G + U),\quad Y = \zeta C + \zeta^{2}(G + T),\quad Z = \zeta C + \zeta^{2}(E + X).
	\end{gather*}
    }
	Let $\zeta=1/4.5238=\frac{10000}{45238}$.
	Since the values
	\begin{gather*}
		c = \tfrac{871}{2500},\quad
		d = \tfrac{2157}{5000},\quad
		e = \tfrac{2879}{5000},\quad
		f = \tfrac{1003}{625},\quad
		g = \tfrac{4757}{5000},\quad
		h = \tfrac{1851}{2500},\\
		p = \tfrac{3267}{5000},\quad
		q = \tfrac{939}{2500},\quad
		r = \tfrac{599}{2500},\\
		s = \tfrac{309}{1000},\quad
		t = \tfrac{727}{2500},\quad
		u = \tfrac{633}{1250},\\
		v = \tfrac{621}{5000},\quad
		w = \tfrac{509}{5000},\\
		x = \tfrac{833}{5000},\quad
		y = \tfrac{689}{5000},\quad
		z = \tfrac{567}{5000}
	\end{gather*}
	satisfy
	\begin{gather*}
		c \ge \zeta + \zeta e,\quad 
		d \ge \zeta + \zeta g,\quad 
		e \ge \zeta + \zeta f,\quad 
		f \ge g + p,\quad 
		g \ge e + q,\quad 
		h \ge d + s,\\
		p \ge e h + q d + x r + v y + u y z,\quad 
		q \ge \zeta g + \zeta(g e) + \zeta^{2}(u + t g + r u),\quad 
		r \ge y + w,\\
		s \ge \zeta g + \zeta e^{2} + \zeta^{2} t + \zeta^{2} x g + \zeta^{2} y u,\quad 
		t \ge x + v,\quad 
		u \ge d h + s d + y r + w y + u z^{2},\\
		v \ge \zeta s + \zeta^{2}(g^{2} + t e + r t),\quad 
		w \ge \zeta s + \zeta^{2}(e g + x e + y t),\\
		x \ge \zeta d + \zeta^{2}(g + u),\quad 
		y \ge \zeta c + \zeta^{2}(g + t),\quad 
		z \ge \zeta c + \zeta^{2}(e + x),
	\end{gather*}
	it follows that the values of the generating functions at $\zeta=1/4.5238$ are at most the corresponding values $c,d,\dots,z$, following the same kind of argument as in Lemma \ref{lem:alternative}. As they are bounded, the growth rates of all the sequences are at most $4.5238$. Therefore,
	\[
		\lambda\le 4.5238.\qedhere
	\]
\end{proof}

\appendix
\section{Proof of Lemma \ref{lem:converge}}
\label{sec:converge}
	We prove by induction. The base case with $n=1$ is trivial. Assuming that it is true up to some $n$, we prove that it also holds for $n+1$. Let us go with the first one:
	\begin{align*}
		e_{n+1} &= x + x f_n \ge x\hat{E}(1) + x\sum_{i=1}^n \hat{F}(i) x^i
		= x\hat{E}(1) + \sum_{i=2}^{n+1} \hat{E}(i) x^i = \sum_{i=1}^{n+1} \hat{E}(i) x^{i},\\
		e_{n+1} &= x + x f_n \le x\hat{E}(1) + x\sum_{i=1}^\infty \hat{F}(i) x^i
		= x\hat{E}(1) + \sum_{i=2}^\infty \hat{E}(i) x^i = \sum_{i=1}^\infty \hat{E}(i) x^{i},
	\end{align*}
	where the induction hypothesis we use is
	\[
		\sum_{i=1}^n \hat{F}(i) x^i \le f_n \le \sum_{i=1}^\infty \hat{F}(i) x^i.
	\]

    We proceed with the sequence $f_n$:
    {\small
	\begin{align*}
		f_{n+1} &= x + x f_n + x g_n + x g_n l_n + g_n h_n \\
		&\ge x\hat{F}(1) + x\sum_{i=1}^n \hat{F}(i) x^i + x\sum_{i=1}^n \hat{G}(i) x^i + x\sum_{i=1}^n \hat{G}(i) x^i \sum_{j=1}^n \hat{L}(j) x^j  + \sum_{i=1}^n \hat{G}(i) x^i \sum_{j=1}^n \hat{H}(j) x^j \\
		&\ge x\hat{F}(1) + \sum_{i=2}^{n+1} \hat{F}(i-1) x^i + \sum_{i=2}^{n+1} \hat{G}(i-1) x^i + \sum_{k=2}^{n+1} \sum_{i+j=k-1} \hat{G}(i) \hat{L}(j) x^k + \sum_{k=2}^{n+1} \sum_{i+j=k} \hat{G}(i) \hat{H}(j) x^k \\
        &= x\hat{F}(1) + \sum_{k=2}^{n+1} \left(\hat{F}(k-1) +  \hat{G}(k-1) + \sum_{i+j=k-1} \hat{G}(i) \hat{L}(j) + \sum_{i+j=k} \hat{G}(i) \hat{H}(j)\right) x^k \\
        &= x\hat{F}(1) + \sum_{k=2}^{n+1} \hat{F}(k) x^k \\
		&= \sum_{k=1}^{n+1} \hat{F}(k) x^k.
	\end{align*}
    }
    We just remark that
    \[
        x\sum_{i=1}^n \hat{G}(i) x^i \sum_{j=1}^n \hat{L}(j) x^j\ge \sum_{k=3}^{n+1} \sum_{i+j=k-1} \hat{G}(i) \hat{L}(j) x^k = \sum_{k=2}^{n+1} \sum_{i+j=k-1} \hat{G}(i) \hat{L}(j) x^k
    \]
    since the term for $k=2$ is zero.

	The other inequality is carried out in almost the same way. We basically replace $n$ and $n+1$ by $\infty$ for the ranges:
    {\small
	\begin{align*}
		f_{n+1} &= x + x f_n + x g_n + x g_n l_n + g_n h_n \\
		&\le x\hat{F}(1) + x\sum_{i=1}^\infty \hat{F}(i) x^i + x\sum_{i=1}^\infty \hat{G}(i) x^i + x\sum_{i=1}^\infty \hat{G}(i) x^i \sum_{j=1}^\infty \hat{L}(j) x^j  + \sum_{i=1}^\infty \hat{G}(i) x^i \sum_{j=1}^\infty \hat{H}(j) x^j \\
		&\le x\hat{F}(1) + \sum_{i=2}^\infty \hat{F}(i-1) x^i + \sum_{i=2}^\infty \hat{G}(i-1) x^i + \sum_{k=2}^\infty \sum_{i+j=k-1} \hat{G}(i) \hat{L}(j) x^k + \sum_{k=2}^\infty \sum_{i+j=k} \hat{G}(i) \hat{H}(j) x^k \\
        &= x\hat{F}(1) + \sum_{k=2}^\infty \left(\hat{F}(k-1) +  \hat{G}(k-1) + \sum_{i+j=k-1} \hat{G}(i) \hat{L}(j) + \sum_{i+j=k} \hat{G}(i) \hat{H}(j)\right) x^k \\
        &= x\hat{F}(1) + \sum_{k=2}^\infty \hat{F}(k) x^k \\
		&= \sum_{k=1}^\infty \hat{F}(k) x^k.
	\end{align*}    
    }

	The treatment for the remaining sequences is carried out similarly with no new remarks; therefore, we omit the details.

\section{Verification of the inequalities in Lemma \ref{lem:full}} \label{sec:verification}
	The first one is trivial:
	\[
		C(n) = \myC_n = \begin{grid}
        \times & \square & \times \\
        \times & \square & \times \\
        \times & \times & \times
    \end{grid}_n =  \begin{grid}
        \times & \square & \times \\
        \times & \times & \times \\
        \times & \times & \times
		\end{grid}_{n-1} \le \begin{grid}
        \times & \square & \times \\
        \times & \times & \times \\
		\end{grid}_{n-1} = E(n-1). \\
	\]
	The following ones are similar by
	\[
		D(n) = \myD_n = \begin{grid}
        \times & \square & \void \\
        \times & \square & \times \\
	\times & \times & \times
    \end{grid}_n =  \begin{grid}
        \times & \square & \void \\
        \times & \times & \times \\
	\times & \times & \times
		\end{grid}_{n-1} \le G(n-1)
	\]
	and
	\[
		E(n) \le F(n-1)
	\]
	is already shown in Section \ref{sec:klarner-rivest}.

	The following three are actually equalities, but we keep them inequalities in the statement for consistency:
	\begin{align*}
		F(n) &= \myF_n = \myG_n + \myP_n = G(n) + P(n), \\
		G(n) &= \myG_n = \myE_n + \myQ_n = E(n) + Q(n), \\
		H(n) &= \myH_n = \myD_n + \myS_n = D(n) + S(n).
	\end{align*}

	We continue to verify the other two of similar nature:
	\begin{align*}
		T(n) &= \myT_n = \myX_n + \myV_n = X(n) + V(n), \\
		R(n) &= \myR_n = \myY_n + \myW_n = Y(n) + W(n).
	\end{align*}

	We then verify those with a bit harder nature with $3$ terms in the upper bound:
	\begin{align*}
		X(n) &= \myX_n \\
		&= \begin{grid}
        \void & \times & \void & \void \\
        \times & \square & \square & \times \\
        \times & \times & \times & \times
    \end{grid}_n + \begin{grid}
        \void & \square & \void & \void \\
        \times & \square & \square & \times \\
        \times & \times & \times & \times
		\end{grid}_n \\
		&\le \myD_{n-1} + \begin{grid}
        \void & \square & \times & \void \\
        \times & \square & \square & \times \\
        \times & \times & \times & \times
		\end{grid}_n + \begin{grid}
        \void & \square & \square & \void \\
        \times & \square & \square & \times \\
        \times & \times & \times & \times
		\end{grid}_n \\
		&\le D(n-1) + G(n-2) + U(n-2).
	\end{align*}
	We sketch the similar verification for $Y(n)$ and $Z(n)$:
	\begin{align*}
	Y(n) &= \myY_n \\
    &= \begin{grid}
        \times & \void & \times & \void \\
        \times & \square & \square & \times \\
        \times & \times & \times & \times
    \end{grid}_n + \begin{grid}
        \times & \times & \square & \void \\
        \times & \square & \square & \times \\
        \times & \times & \times & \times
    \end{grid}_n + \begin{grid}
        \times & \square & \square & \void \\
        \times & \square & \square & \times \\
        \times & \times & \times & \times
    \end{grid}_n \\
    &\le C(n-1) + G(n-2) + T(n-2), \\
	Z(n) &= \myZ_n \\
    &= \begin{grid}
        \times & \times & \void & \times \\
        \times & \square & \square & \times \\
        \times & \times & \times & \times
    \end{grid}_n + \begin{grid}
        \times & \square & \times & \times \\
        \times & \square & \square & \times \\
        \times & \times & \times & \times
    \end{grid}_n + \begin{grid}
        \times & \square & \square & \times \\
        \times & \square & \square & \times \\
        \times & \times & \times & \times
    \end{grid}_n \\
    &\le C(n-1) + E(n-2) + X(n-2).
	\end{align*}

	We continue with those having $4$ terms in the upper bound:
	\begin{align*}
		V(n) &= \myV_n \\
		    &=\begin{grid}
			\void & \times & \void & \void \\
			\times & \square & \square & \square \\
			\times & \times & \times & \times
		    \end{grid}_n + \begin{grid}
			\void & \square & \times & \void \\
			\times & \square & \square & \square \\
			\times & \times & \times & \times
		    \end{grid}_n + \begin{grid}
			\void & \square & \square & \times \\
			\times & \square & \square & \square \\
			\times & \times & \times & \times
		    \end{grid}_n + \begin{grid}
			\void & \square & \square & \square \\
			\times & \square & \square & \square \\
			\times & \times & \times & \times
		    \end{grid}_n \\
		    &\le S(n-1)
		      + \sum_{i+j=n-2} G(i)G(j)
		      + \sum_{i+j=n-2} T(i)E(j)
		      + \sum_{i+j=n-2} R(i)T(j).
	\end{align*}
	All the terms are obvious, except possibly the last one needing some more explanation. We first exclude two isolated squares (denoted as black squares in $\begin{grid}
			\void & \square & \square & \square \\
			\times & \blacksquare & \blacksquare & \square \\
			\times & \times & \times & \times
		    \end{grid}$), and decompose the polyomino into two smaller polyominoes with each containing 2 out of 4 remaining squares:
	\[
		 \begin{grid}
			\void & \square & \square & \square \\
			\times & \blacksquare & \blacksquare & \square \\
			\times & \times & \times & \times
		    \end{grid}_n = \begin{grid}
			\void & \mybox{u} & \mybox{v} & \mybox{s} \\
			\times & \times & \times & \mybox{r} \\
			\times & \times & \times & \times
		 \end{grid}_{n-2} \le \sum_{i+j=n-2} \begin{grid}
			 \void & \void & \void & \times & \void\\
			 \void & \mybox{u} & \mybox{v} & \times & \times \\
			 \times & \times & \times & \times & \times \\
			 \times & \times & \times & \times & \void
		 \end{grid}_i \begin{grid}
			\void & \times & \times & \void \\
			\times & \times & \times & \mybox{s} \\
			\times & \times & \times & \mybox{r} \\
			\times & \times & \times & \times
		 \end{grid}_j \le \sum_{i+j=n-2} R(i)T(j).
	\]
	The verification of $W(n)$ is carried out in a similar manner:
	\begin{align*}
		W(n) &= \myW_n \\
		&=\begin{grid}
        \times & \times & \void & \void \\
        \times & \square & \square & \square \\
        \times & \times & \times & \times
    \end{grid}_n + \begin{grid}
        \times & \square & \times & \void \\
        \times & \square & \square & \square \\
        \times & \times & \times & \times
    \end{grid}_n + \begin{grid}
        \times & \square & \square & \times \\
        \times & \square & \square & \square \\
        \times & \times & \times & \times
    \end{grid}_n + \begin{grid}
        \times & \square & \square & \square \\
        \times & \square & \square & \square \\
        \times & \times & \times & \times
    \end{grid}_n \\
		&\le S(n-1)
		      + \sum_{i+j=n-2} E(i)G(j)
		      + \sum_{i+j=n-2} X(i)E(j)
		      + \sum_{i+j=n-2} Y(i)T(j),
	\end{align*}
    where the last one is carried out just like the previous inequality:
   	\[
		 \begin{grid}
			\times & \square & \square & \square \\
			\times & \blacksquare & \blacksquare & \square \\
			\times & \times & \times & \times
		    \end{grid}_n = \begin{grid}
			\times & \mybox{u} & \mybox{v} & \mybox{s} \\
			\times & \times & \times & \mybox{r} \\
			\times & \times & \times & \times
		 \end{grid}_{n-2} \le \sum_{i+j=n-2} \begin{grid}
			 \void & \void & \void & \times & \void\\
			 \times & \mybox{u} & \mybox{v} & \times & \times \\
			 \times & \times & \times & \times & \times \\
			 \times & \times & \times & \times & \void
		 \end{grid}_i \begin{grid}
			\void & \times & \times & \void \\
			\times & \times & \times & \mybox{s} \\
			\times & \times & \times & \mybox{r} \\
			\times & \times & \times & \times
		 \end{grid}_j \le \sum_{i+j=n-2} Y(i)T(j).
	\] 
    
	Next ones are those with $5$ terms in the upper bound. We verify them in the increasing order of complexity:
	\begin{align*}
		Q(n) &= \myQ_n \\
        &= \begin{grid}
        \void & \times & \void \\
        \times & \square & \square \\
        \times & \times & \times
    \end{grid}_n + \begin{grid}
        \void & \square & \times \\
        \times & \square & \square \\
        \times & \times & \times
    \end{grid}_n + \begin{grid}
			\void & \square & \square & \void \\
			\times & \square & \square & \times \\
			\times & \times & \times & \void
    \end{grid}_n + \begin{grid}
			\void & \square & \square & \times \\
			\times & \square & \square & \square \\
			\times & \times & \times & \void
    \end{grid}_n + \begin{grid}
			\void & \square & \square & \square\\
			\times & \square & \square & \square\\
			\times & \times & \times & \void
    \end{grid}_n \\
		&\le G(n-1)
	      + \sum_{i+j=n-1} G(i)E(j)
	      + U(n-2)
	      + \sum_{i+j=n-2} T(i)G(j)
	      + \sum_{i+j=n-2} R(i)U(j),
    \end{align*}
    where the last term is derived likewise:
	\[
		 \begin{grid}
			\void & \square & \square & \square \\
			\times & \blacksquare & \blacksquare & \square \\
			\times & \times & \times & \void
		    \end{grid}_n = \begin{grid}
			\void & \mybox{u} & \mybox{v} & \mybox{s} \\
			\times & \times & \times & \mybox{r} \\
			\times & \times & \times & \void
		 \end{grid}_{n-2} \le \sum_{i+j=n-2} \begin{grid}
			 \void & \void & \void & \times & \void\\
			 \void & \mybox{u} & \mybox{v} & \times & \times \\
			 \times & \times & \times & \times & \times \\
			 \times & \times & \times & \times & \void
		 \end{grid}_i \begin{grid}
			\void & \times & \times & \void \\
			\times & \times & \times & \mybox{s} \\
			\times & \times & \times & \mybox{r} \\
			\times & \times & \times & \void
		 \end{grid}_j \le \sum_{i+j=n-2} R(i)U(j).
	\]

    \begin{align*}
	S(n) &= \myS_n \\
		&= \begin{grid}
        \times & \times & \void \\
        \times & \square & \square \\
        \times & \times & \times
    \end{grid}_n + \begin{grid}
        \times & \square & \times \\
        \times & \square & \square \\
        \times & \times & \times
    \end{grid}_n +  \begin{grid}
	    \times & \square & \square & \void \\
		\times & \square & \square & \times \\
			\times & \times & \times & \void
    \end{grid}_n +  \begin{grid}
	    \times & \square & \square & \times \\
		\times & \square & \square & \square \\
			\times & \times & \times & \void
    \end{grid}_n + \begin{grid}
	    \times & \square & \square & \square \\
		\times & \square & \square & \square \\
			\times & \times & \times & \void
    \end{grid}_n \\
	&\le G(n-1)
	      + \sum_{i+j=n-1} E(i)E(j)
	      + T(n-2)
	      + \sum_{i+j=n-2} X(i)G(j)
	      + \sum_{i+j=n-2} Y(i)U(j),
	\end{align*}
    where the last term is derived just as before:
   	\[
		 \begin{grid}
			\times & \square & \square & \square \\
			\times & \blacksquare & \blacksquare & \square \\
			\times & \times & \times & \void
		    \end{grid}_n = \begin{grid}
			\times & \mybox{u} & \mybox{v} & \mybox{s} \\
			\times & \times & \times & \mybox{r} \\
			\times & \times & \times & \void
		 \end{grid}_{n-2} \le \sum_{i+j=n-2} \begin{grid}
			 \void & \void & \void & \times & \void\\
			 \times & \mybox{u} & \mybox{v} & \times & \times \\
			 \times & \times & \times & \times & \times \\
			 \times & \times & \times & \times & \void
		 \end{grid}_i \begin{grid}
			\void & \times & \times & \void \\
			\times & \times & \times & \mybox{s} \\
			\times & \times & \times & \mybox{r} \\
			\times & \times & \times & \void
		 \end{grid}_j \le \sum_{i+j=n-2} Y(i)U(j).
	\]
    
		The last two sequences are the most complicated ones with $3$-fold convolutions:
	\begin{align*}
		P(n) &= \myP_n \\
			&= \begin{grid}
			\times & \void & \void \\
			\square & \square & \void \\
			\times & \times & \times
		    \end{grid}_n + \begin{grid}
			\square & \times & \void \\
			\square & \square & \void \\
			\times & \times & \times
		    \end{grid}_n + \begin{grid}
			\times & \void & \void \\
			\square & \square & \void \\
			\square & \square & \void \\
			\times & \times & \times
		    \end{grid}_n + \begin{grid}
			\square & \times & \void \\
			\square & \square & \void \\
			\square & \square & \void \\
			\times & \times & \times
		    \end{grid}_n + \begin{grid}
			\square & \square & \void \\
			\square & \square & \void \\
			\square & \square & \void \\
			\times & \times & \times
		    \end{grid}_n \\
		&\le \sum_{i+j=n} E(i)H(j)
	      + \sum_{i+j=n} Q(i)D(j)
	      + \sum_{i+j=n} X(i)R(j) \\
         &\quad + \sum_{i+j=n} V(i)Y(j)
	      + \sum_{i+j+k=n} U(i)Y(j)Z(k).
	\end{align*}
    The last term is the first time we use a $3$-fold convolution. We actually split the polyomino into $3$ polyominoes instead of $2$ polyominoes by
    \[
    \begin{grid}
			\mybox{u} & \mybox{v} & \void \\
			\mybox{s} & \mybox{x} & \void \\
			\mybox{r} & \mybox{a} & \void \\
			\times & \times & \times
    \end{grid}_n \le \sum_{i+j+k=n} \begin{grid}
			\void & \mybox{u} & \mybox{v} & \void \\
			\times & \times & \times & \times \\
			\times & \times & \times & \times \\
			\void & \times & \times & \times
    \end{grid}_i     \begin{grid}
			\void & \times & \times & \void \\
			\times & \times & \times & \times \\
			\void & \mybox{s} & \times & \times \\
			\void & \mybox{r} & \times & \times \\
			\void & \times & \times & \times
    \end{grid}_j     \begin{grid}
            \void & \times & \times & \void \\
			\times & \times & \times & \times \\
			\times & \times & \mybox{x} & \void \\
			\times & \times & \mybox{a} & \void \\
			\void & \times & \times & \times
    \end{grid}_k \le \sum_{i+j+k=n} U(i)Y(j)Z(k).
    \]
    The remaining is verified likewise:
\begin{align*}
			U(n) &= \myU_n \\
			&= \begin{grid}
			\void & \times & \void & \void \\
			\void & \square & \square & \void \\
			\times & \times & \times & \times
		    \end{grid}_n + \begin{grid}
			\void & \square & \times & \void \\
			\void & \square & \square & \void \\
			\times & \times & \times & \times
		    \end{grid}_n + \begin{grid}
			\void & \times & \void & \void \\
			\void & \square & \square & \void \\
			\void & \square & \square & \void \\
			\times & \times & \times & \times
		    \end{grid}_n + \begin{grid}
			\void & \square & \times & \void \\
			\void & \square & \square & \void \\
			\void & \square & \square & \void \\
			\times & \times & \times & \times
		    \end{grid}_n + \begin{grid}
			\void & \square & \square & \void \\
			\void & \square & \square & \void \\
			\void & \square & \square & \void \\
			\times & \times & \times & \times
		    \end{grid}_n \\
			&\le \sum_{i+j=n} D(i)H(j)
		      + \sum_{i+j=n} S(i)D(j)
		      + \sum_{i+j=n} Y(i)R(j) \\
		      & \quad + \sum_{i+j=n} W(i)Y(j)
		      + \sum_{i+j+k=n} U(i)Z(j)Z(k),
		\end{align*}
where the last term is due to
    \[
    \begin{grid}
			\void & \mybox{u} & \mybox{v} & \void \\
			\void & \mybox{s} & \mybox{x} & \void \\
			\void & \mybox{r} & \mybox{a} & \void \\
			\times & \times & \times & \times
    \end{grid}_n \le \sum_{i+j+k=n} \begin{grid}
			\void & \mybox{u} & \mybox{v} & \void \\
			\times & \times & \times & \times \\
			\times & \times & \times & \times \\
			\times & \times & \times & \times
    \end{grid}_i     \begin{grid}
			\void & \times & \times & \void \\
			\times & \times & \times & \times \\
			\void & \mybox{s} & \times & \times \\
			\void & \mybox{r} & \times & \times \\
			\times & \times & \times & \times
    \end{grid}_j     \begin{grid}
            \void & \times & \times & \void \\
			\times & \times & \times & \times \\
			\times & \times & \mybox{x} & \void \\
			\times & \times & \mybox{a} & \void \\
			\times & \times & \times & \times
    \end{grid}_k \le \sum_{i+j+k=n} U(i)Z(j)Z(k).
    \]

We have verified all the inequalities and hence proved Lemma \ref{lem:full}.

\bibliographystyle{unsrt}
\bibliography{fewtwigs}

\end{document}